\theoremstyle{definition}
\newtheorem{cor}{Corollary}
\newtheorem{lem}{Lemma}
\newtheorem{thm}{Theorem}
\theoremstyle{definition}
\newtheorem{defn}{Definition}
\theoremstyle{definition}
\newtheorem{example}{Example}
\newtheorem{rem}{Remark}
\newenvironment{pf}{\proof}{\endproof}
\newcounter{cnt}
\def\mydggeometry{\makeatletter\dg@YGRID=1\dg@XGRID=20\unitlength=0.003pt\makeatother}
\makeatother \theoremstyle{remark}
\numberwithin{equation}{section}
\let\bwdg\bigwedge
\def\bigwedge{{\textstyle\bwdg}}
\newcommand{\nc}{\newcommand}
\newcommand{\rnc}{\renewcommand}
\nc{\cal}{\mathcal} \nc{\goth}{\mathfrak} \rnc{\bold}{\mathbf}
\nc\bomega{{\mbox{\boldmath $\omega$}}} \nc\bpsi{{\mbox{\boldmath $\Psi$}}}
 \nc\balpha{{\mbox{\boldmath $\alpha$}}}
 \nc\bpi{{\mbox{\boldmath $\pi$}}}
 \nc\bvpi{{\mbox{\boldmath $\varpi$}}}
\nc\chara{\operatorname{ch}}
  \nc\bxi{{\mbox{\boldmath $\xi$}}}
\nc\bmu{{\mbox{\boldmath $\mu$}}} \nc\bcN{{\mbox{\boldmath $\cal{N}$}}} \nc\bcm{{\mbox{\boldmath $\cal{M}$}}} \nc\blambda{{\mbox{\boldmath
$\lambda$}}}\nc\bnu{{\mbox{\boldmath $\nu$}}}
\def\section{\def\@secnumfont{\mdseries}\@startsection{section}{1}%
  \z@{.7\linespacing\@plus\linespacing}{.5\linespacing}%
  {\normalfont\scshape\centering}}
\def\subsection{\def\@secnumfont{\bfseries}\@startsection{subsection}{2}%
  {\parindent}{.5\linespacing\@plus.7\linespacing}{-.5em}%
  {\normalfont\bfseries}}
 \nc{\Hom}{\operatorname{Hom}}
  \nc{\mode}{\operatorname{mod}}
\nc{\End}{\operatorname{End}} \nc{\wh}[1]{\widehat{#1}} \nc{\Ext}{\operatorname{Ext}} \nc{\ch}{\text{ch}} \nc{\ev}{\operatorname{ev}}
\nc{\Ob}{\operatorname{Ob}} \nc{\soc}{\operatorname{soc}} \nc{\rad}{\operatorname{rad}} \nc{\head}{\operatorname{head}}
\def\mult{\operatorname{mult}}
 \nc{\Cal}{\cal} \nc{\Xp}[1]{X^+(#1)} \nc{\Xm}[1]{X^-(#1)}
\nc{\on}{\operatorname} \nc{\Z}{{\bold Z}} \nc{\J}{{\cal J}}  \nc{\Q}{{\bold Q}}
\nc{\N}{{\bold N}}  \nc\boa{\bold a} \nc\bob{\bold b} \nc\boc{\bold c} \nc\bod{\bold d} \nc\boe{\bold e} \nc\bof{\bold f} \nc\bog{\bold g}
\nc\boh{\bold h} \nc\boi{\bold i} \nc\boj{\bold j} \nc\bok{\bold k} \nc\bol{\bold l} \nc\bom{\bold m} \nc\bon{\mathbb n} \nc\boo{\bold o}
\nc\bop{\bold p} \nc\boq{\bold q} \nc\bor{\bold r} \nc\bos{\bold s} \nc\boT{\bold t} \nc\boF{\bold F} \nc\bou{\bold u} \nc\bov{\bold v}
\nc\bow{\bold w} \nc\boz{\bold z}\nc\ba{\bold A} \nc\bb{\bold B} \nc\bc{\mathbb C} \nc\bd{\bold D} \nc\be{\bold E} \nc\bg{\bold
G} \nc\bh{\bold H} \nc\bi{\bold I} \nc\bj{\bold J} \nc\bk{\bold K} \nc\bl{\bold L} \nc\bm{\bold M} \nc\bn{\mathbb N} \nc\bo{\bold O} \nc\bp{\bold
P} \nc\bq{\bold Q} \nc\br{\bold R} \nc\bs{\bold S} \nc\bt{\bold T} \nc\bu{\bold U} \nc\bv{\bold V} \nc\bw{\bold W} \nc\bz{\mathbb Z} \nc\bx{\bold
x} \nc\KR{\bold{KR}} \nc\rk{\bold{rk}} \nc\het{\text{ht }}
\nc\toa{\tilde a} \nc\tob{\tilde b} \nc\toc{\tilde c} \nc\tod{\tilde d} \nc\toe{\tilde e} \nc\tof{\tilde f} \nc\tog{\tilde g} \nc\toh{\tilde h}
\nc\toi{\tilde i} \nc\toj{\tilde j} \nc\tok{\tilde k} \nc\tol{\tilde l} \nc\tom{\tilde m} \nc\ton{\tilde n} \nc\too{\tilde o} \nc\toq{\tilde q}
\nc\tor{\tilde r} \nc\tos{\tilde s} \nc\toT{\tilde t} \nc\tou{\tilde u} \nc\tov{\tilde v} \nc\tow{\tilde w} \nc\toz{\tilde z} \nc\woi{w_{\omega_i}}
\newcommand\wrapped[1]%
\begin{document}
	
	\title{On the spectrum of generalized H--join operation constrained by indexing maps - I}
		
	\author{R. GaneshBabu}
	\address{Indian Institute of Technology Madras, Chennai, India.}
	\email{ma22d011@smail.iitm.ac.in}
	\author{G. Arunkumar$^{\ast}$}
	\thanks{}
	\address{Indian Institute of Technology Madras, Chennai, India.}
	\email{garunkumar@iitm.ac.in}


		\thanks{$^{\ast}$-The corresponding author. He acknowledges the NFIG grant of the Indian Institute of Technology Madras RF/22-23/0985/MA/NFIG/009003 and the SERB Startup Research Grant SRG/2022/001281. The first author acknowledges the financial support from Prime Minister's Research Fellowship (ID: 2503528)}. 
	
	\subjclass[2010]{05C50, 05C76}
	\keywords{Graph operations, Graph eigenvalues}
	
	\maketitle
 
 \begin{abstract}
Fix $m \in \mathbb N$. A new generalization of the $H$-join operation of a family of graphs $\{G_1, G_2, \dots, G_k\}$ constrained by indexing maps $I_1,I_2,\dots,I_k$ is introduced as $H_m$-join of graphs, where the maps $I_i:V(G_i)$ to $[m]$. Various spectra, including adjacency, Laplacian, and signless Laplacian spectra, of any graph $G$, which is a $H_m$-join of graphs is obtained by introducing the concept of $E$-main eigenvalues. More precisely, we deduce that in the case of adjacency spectra, there is an associated matrix $E_i$ of the graph $G_i$ such that a $E_i$-non-main eigenvalue of multiplicity $m_i$ of $A(G_i)$ carry forward as an eigenvalue for $A(G)$ with the same multiplicity $m_i$, while an $E_i$-main eigenvalue of multiplicity $m_i$ carry forward as an eigenvalue of $G$ with multiplicity at least $m_i - m$. As a corollary, the universal adjacency spectra of some families of graphs is obtained by realizing them as $H_m$-joins of graphs. As an application, infinite families of cospectral families of graphs are found. 
\end{abstract}
	
 	\section{Introduction}\label{intro}
All the graphs considered in this paper are finite and simple. Let $G$ be a graph on $n$ vertices. Let $V(G)$, $E(G)$, $A(G)$, and $D(G)$ be the vertex set, the edge set, the adjacency matrix, and the degree matrix associated with the graph $G$. Let $I_n$ and $J_n$ be the identity and all-one matrix of order $n$, respectively. The universal adjacency matrix of $G$, denoted by $U(G)$, is  the matrix $U(G) = \alpha A(G) + \beta I_n + \gamma J_n + \delta D(G)$ where $\alpha,\beta,\gamma,\delta \in \mathbb R$ and $\alpha \ne 0$. By the $X$-spectra of a graph $G$, we mean the multiset of eigenvalues of $X(G)$, where $X(G)$ is a matrix associated with $G$ such as the adjacency matrix $A(G)$, the Laplacian matrix $L(G)=D(G)-A(G)$, the signless Laplacian matrix $Q(G)=D(G)+A(G)$, the Seidal matrix $S(G)=J_n-I_n-2A(G)$ and the more general universal adjacency matrix $U(G)$. We denote the $X$-spectra of $G$ by $\sigma(X(G))$.

The study of spectra of graphs obtained from various graph operations is a well-explored area. See \cite{Barik} for a survey on this topic. 
 Let $H$ be a graph on the vertex set $V(H)=\{v_1,v_2,\dots,v_k\}$ and $\mathcal{F}=\{G_1,G_2,\dots,G_k\}$ be a family of graphs with $V(G_i) = \{v_i^{1},\dots,v_i^{n_i}\}$ for $1\leq i \leq k.$ In this paper, we mainly deal with graph operations in which the vertex $v_i$ of the graph $H$ is replaced by the graph $G_i$, and depending on the adjacency between vertices $v_i$ and $v_j$ in $H$, we make adjacency between vertices of $G_i$ and those of $G_j$ in a variety of ways. One main object of interest when it comes to these operations is to establish a connection between the $X$-spectra of the resultant graph $G$ and the $X$-spectra of the factor graphs $\{G_1,G_2,\dots,G_k\}$.
 
 In this direction, numerous results have been proved in the literature concerning the eigenvalues of $G_i$, which get carried forward to the resultant graph $G$, especially in the case of the $H$-join operation:  The $H$-join of $G_1,G_2,\dots,G_k$ is the graph obtained by replacing the vertex $v_i$ of $H$ by the graph $G_i$ (for each $i$) and by letting each vertex of $G_i$ be adjacent to each vertex of $G_j$ if $v_i$ and $v_j$ are adjacent in $H$. This operation was introduced by Schwenk as the generalized composition of graphs in \cite{sch} and was reintroduced as the $H$-join of graphs by Cardoso et al. in \cite{hjn}. In \cite{cdo11}, Cardoso et al. obtained the $A$-spectra of $H$-join of a family of graphs $\mathcal{F}$ when each $G_i$ is a regular graph and $H$ is the path graph $P_k$. Subsequently, in \cite{hjn}, Cardoso et al. obtained the $A$-spectra of $H$-join of the family $\mathcal{F}$  when each $G_i$ is a regular graph and $H$ is any graph. Recently, in \cite[Theorem 4]{msg}, Saravanan et al. proved the following result about the $U$-spectra of arbitrary $H$-joins of graphs, i.e., for any $H$ and any $G_i$ (not necessarily regular).
Let $G$ be the $H$-join of the family of graphs $\mathcal{F}=\{G_1,G_2,\dots,G_k\}.$ Then
$$\det(\lambda I_{n} - U(G)) =\Bigg( \displaystyle \Pi_{i=1}^k \phi_i(\lambda-\delta w_i) \Gamma_i
\Bigg)\det(\widetilde{U}(G))$$

\begin{equation}\label{UAdofG}
	\text{where}\,\,    \widetilde{U}(G) =   \begin{bmatrix}
	\frac{1}{\Gamma_1} & -(\rho_{1,2} \alpha +\gamma)  & \cdots & -(\rho_{1,k} \alpha +\gamma)  \\
	-(\rho_{2,1} \alpha +\gamma)  & \frac{1}{\Gamma_2} & \cdots & -(\rho_{2,k} \alpha +\gamma) \\
	\vdots & \vdots & \ddots & \vdots  \\
	-(\rho_{k,1} \alpha +\gamma) & -(\rho_{k,2} \alpha +\gamma) & \cdots &\frac{1}{\Gamma_{k}}
	\end{bmatrix}
\end{equation}	
where $n=\sum_{i=i}^{k} n_i, w_i = \displaystyle \sum_{v_l \in N_H(v_i)}n_l, \phi_i(\lambda-\delta w_i)=\det((\lambda-\delta w_i) I_{n_i}-U(G_i)$), $ \Gamma_{(U(G_i)+\delta w_i I_{n_i})}(\bold 1_{n_i}) = \Gamma_i:=\bold 1_{n_i}^t (\lambda I_{n_i}-(U(G_i)+\delta w_i i_{n_i}))^{-1} \bold 1_{n_i}.$

The functions $\Gamma_i$ are called the main functions associated with the graph $G_i$. Thus,
the authors have expressed the characteristic polynomial of the universal adjacency matrix of $H$-join of arbitrary graphs in terms of the characteristic polynomials of the universal adjacency matrices and the main functions of the factor graphs $\{G_1,\dots, G_k\}$. As a consequence of Equation \eqref{UAdofG}, they also obtained the following. For every non-main eigenvalue $\lambda$ of $U(G_i)$, $\lambda+\delta w_i$  is an eigenvalue of $U(G)$ with the same multiplicity, whereas
for every main eigenvalue $\lambda$ of $U(G_i)$, $\lambda+\delta w_i$  is an eigenvalue of $U(G)$ with possibly one less multiplicity. This is achieved by introducing the concept of $u$-main eigenvalue for an arbitrary vector $u$: An eigenvalue $\lambda$ of the matrix $X(G)$ is called $u$-main if its eigenspace is not orthogonal to the vector $u$. This notion is a generalization of main eigenvalues introduced by Cvetkovic in \cite{cvetmain}, in which case, we take $u=\bold 1_{n}$ the all-one vector of size $n \times 1$. See \cite{mainsurvey} for a survey on the study of main eigenvalues. In \cite{cons}, the \textit{$H$-generalized join} of graphs with respect to $\mathcal{S}=\{S_i : S_i \subseteq V(G_i), i \in [k]\}$ is introduced and obtained by replacing the vertex $v_i$ by the graph $G_i$ for each $i$ and by letting each vertex of $S_i$ be adjacent to each vertex of $S_j$ if $v_i$ and $v_j$ are adjacent in $H$. The Spectra of this operation is studied in \cite{msg,cons}.

In this paper, we introduce a new graph operation that generalizes $H$-generalized join of graphs (in turn generalizes $H$-join of graphs), which we call the $H_m$-join of graphs: Fix $m \in \mathbb{N}$. For $1 \le i \le k$, let $I_i:V(G_i) \to [m]:=\{1,\dots,m\}$ be arbitrary (indexing) maps. The $H_m$-join of a family of graphs $\mathcal{F}=\{G_1,G_2,\dots,G_k\}$ with respect to the indexing maps $\mathcal{I}=\{I_1,I_2,\dots,I_k\}$ is obtained by replacing the vertex $v_i \in V(H)$ by the graph $G_i$ (for each $i$) and by letting a vertex $v_i^p\in V(G_i)$ be adjacent to a vertex  $v_j^q \in V(G_j)$ if  $v_i$ and $v_j$ are adjacent in $H$ and $I_i(v_i^p)=I_j(v_j^q)$. The Cartesian product $G_1\openbox G_2$ of any two graphs, Generalized Petersen graphs, Generalized web graphs, Generalized helm graphs, Lollipop graphs, and tadpole graphs are some examples of graph families which can be realized as $H_m$-joins [c.f. Section \ref{Otherfamilies}]. This shows that the spectra of a wide range of graphs can be studied uniformly using our $H_m$-join operation.

We define the $n_i \times m$ \textit{indexing} matrix $E_i$ associated with the pair $(G_i,I_i)$ as follows:   $
(E_i)_{st}=\begin{cases}
    1 & \text{if}\ I_i(v_i^s)=t \\
    0 & \text{otherwise.}
\end{cases}
$
 These matrices play an important role in proving the results concerning the study of spectra of graphs obtained from the $H_m$-join operation of graphs. 

In Section \ref{basicterms}, we shall see  that 
the concept of $u$-main eigenvalues is insufficient to study the spectra of graphs resulting from the $H_m$-join operation. Consequently, we generalize $u$-main eigenvalues to $E$-main eigenvalues, where $E$ is an arbitrary rectangular matrix. An eigenvalue $\lambda$ of the matrix $X(G)$ is defined as an $E$-main eigenvalue if its eigenspace is not orthogonal to the column space of the matrix $E$. 

In Section \ref{mainresult}, similar to Equation \eqref{UAdofG}, we discuss an analogous description of the characteristic polynomial of $U(G)$ of the graph $G$ which is obtained from the $H_m$-join operation. We also prove that, for each $1 \le i \le k$, every $E_i$-non-main eigenvalue of $A(G_i)$ gets carried forward to $G$ with the same multiplicity and $E_i$-main eigenvalues carry forward at least $n_i-m$ times.

In Section \ref{hgeneralizedjoin}, we see applications of our findings in calculating the spectra of graphs obtained by the $H$-generalized join operation. We first realize an arbitrary $H$-generalized join of graphs as an $H_m$-join of graphs as follows: Let $m \ \text{be}\  k+1$, where $k$ is the number of vertices in $H$. Then, for each $i\in [k]$, define
$I_i(v_i^p)=\begin{cases}
    1 & \text{if}\ v_i^p\in S_i \\
    i+1 & \text{otherwise}
\end{cases}.$ Using this realization, we compute the $U$-spectra of an arbitrary $H$-generalized join of graphs.
 Finally, we find infinite families of non-isomorphic $H$-generalized joins of graphs, which are $A$-cospectral, $S$-cospectral, and $L$-cospectral.

\section{Some basic definitions and lemmas}\label{basicterms}

Given a graph $G$, we denote its vertex and edge set by $V(G)$ and $E(G)$, respectively. If there is an edge between vertices, say $i$ and $j$ of $G$, then we write $ij \in E(G)$. Also, for simplicity, we denote the adjacency spectrum of the graph $G$ by $\sigma(A(G))$. For a matrix $M$, we shall denote its $ij$-th entry by $(M)_{ij}$ and for an eigenvalue $\lambda$ of $M$, $\xi_M(\lambda)$ shall denote its eigenspace. For basic notations and terminologies in graph theory, we follow \cite{diestel}.
 
We use these notations throughout.

\subsection{$H_m$-join of graphs }
We start with the formal definition of $H$-join of graphs. 
 \begin{defn}\label{h join}
 Let $H$ be a graph on the vertex set $V(H) =\{v_1,v_2,\dots,v_k\}$ with the adjacency matrix ${A}(H) = (a_{ij})_{1\leq i,j \leq k}$. Let $\mathcal{F} = \{G_i \colon 1\leq i \leq k\}$ be an arbitrary family of graphs with vertex sets $V(G_i) = \{v_i^{1},\dots,v_i^{n_i}\}$ for $1\leq i \leq k$.  The {\em{H}-join} of the family $\mathcal{F}$ of graphs, denoted by $\bigvee_H^{\mathcal{F}}$, is obtained by replacing each vertex $v_i$ of $H$ by the graph $G_i$ in $H$ such that if $v_i$ is adjacent to $v_j$ in $H$ then each vertex of $G_i$ is adjacent to every vertex of $G_j$ in $\bigvee_H^{\mathcal{F}}$. More precisely, 
 \begin{enumerate}
     \item $V\big( \bigvee_{H}^\mathcal{F} \big) = \displaystyle \bigsqcup_{i=1}^k V(G_i)$, and
     \item $E\big( \bigvee_{H}^\mathcal{F} \big) = \big(\displaystyle  \bigsqcup_{i=1}^k E(G_i)\big) \sqcup \big(   \bigsqcup_{ (v_i,v_j) \in E(H)} \{ xy : x \in V(G_i), y \in V(G_j) \}\big).$
 \end{enumerate}
 \end{defn}

Next, we introduce the $H_m$-join of graphs motivated by the above-defined notion of $H$-join of graphs.
 
\begin{defn} Let the graph $H$, and the collection of graphs $\mathcal F= \{G_i \colon 1\leq i \leq k\}$ be as in the previous definition. 
Fix $m \in \mathbb N$, and let $\mathcal{I} = \{I_i: 1 \le i \le k\}$ be a collection of maps $I_i: V(G_i) \to [m]$ which we call the indexing maps. Then, the $H_m$-join of the graphs $\{G_i \colon 1\leq i \leq k\}$, denoted by $\bigvee_{H}^{\mathcal{F,I}}$, is obtained by replacing the $i$th vertex of the graph $H$ by the graph $G_i$ and, if the vertices $i$ and $j$ of $H$ are adjacent then the pair of vertices $\{u,v\}$ where $u \in V(G_i)$ and $v \in V(G_j)$ satisfying $I_i(u) = I_j(v)$ are adjacent in $\bigvee_{H}^{\mathcal{F,I}}$. Thus we have
\begin{enumerate}
			\item $V(\bigvee_{H}^{\mathcal{F,I}}) = \displaystyle{\bigsqcup_{i=1}^k V(G_i)}$, and
            \item $E(\bigvee_{H}^{\mathcal{F,I}}) =  \displaystyle{(\bigsqcup_{i=1}^k E(G_i))} \sqcup(\bigsqcup_{(i,j)\in E(H)} \{uv: u \in V(G_i), v\in V(G_j) \text{ with }I_i(u)=I_j(v)\})  .$
\end{enumerate}
\end{defn}

\begin{example}
The following graph $G$ is the $(P_4)_5$-join of $\mathcal F=\{K_3,P_4,C_5,K_{3,3}\}$ with the indexing functions $I_1,I_2,I_3,I_4$ and each vertex $v$ is labeled with the values of these indexing functions.
\begin{figure}[h]
\centering
\begin{tikzpicture}[
every edge/.style = {draw=black,very thick},
 vrtx/.style args = {#1/#2}{%
      circle, draw, thick, fill=black,
      minimum size, label=#1:#2}
                    ]
\node(A) [vrtx=left/1] at (0, 1.5) {};
\node(B) [vrtx=left/1] at (-1, 0.5) {};
\node(C) [vrtx=left/1] at (0,-.5) {};
\path   (A) edge (B)
        (A) edge (C)
        (B) edge (C);

\node (2A) [vrtx=left/2]     at ( 2.5, 0) {};
\node (2B) [vrtx=left/2]     at (2.5, 1) {};
\node (2C) [vrtx=left/3]    at ( 2.5,2) {};
\node (2D) [vrtx=left/5]    at ( 2.5,-1) {};
\path   (2A) edge (2B)
        (2A) edge (2C)
        (2A) edge (2D);

\node (3A) [vrtx=above/5]     at ( 6.5, 2) {};
\node (3B) [vrtx=right/2]     at (5, 0.5) {};
\node (3C) [vrtx=left/4]    at ( 8,0.5) {};
\node (3D) [vrtx=right/1]    at ( 7.5,-1) {};
\node (3E) [vrtx=left/3]    at ( 5.5,-1) {};
\path   (3A) edge (3B) 
(3B) edge (3E)
(3D) edge (3C)
(3E) edge (3D)
(3C) edge (3A)
(2B) edge[blue] (3B)
       (2A) edge[blue] (3B)
       (2C) edge[blue] (3E)
       (2D) edge[blue,bend left=25] (3A);
\node (4A) [vrtx=above/4]     at ( 10.5, 1.5) {};
\node (4B) [vrtx=above/4]     at (12, 1.5) {};
\node (4C) [vrtx=above/5]    at ( 13.5,1.5) {};
\node (4D) [vrtx=below/4]    at ( 10.5,-.5) {};
\node (4E) [vrtx=below/4]    at ( 12,-.5) {};
\node (4F) [vrtx=below/1]    at ( 13.5,-.5) {};

\path (4A) edge  (4D) edge (4E) edge (4F)
(4B) edge  (4D) edge (4E) edge (4F)
(4C) edge  (4D) edge (4E) edge (4F)

(3C) edge[blue] (4A) edge[blue] (4D) edge[blue,bend left=75] (4B) edge[blue,bend right=75] (4E) 
(4F) edge[blue, bend left=55](3D)

(3A) edge[blue,bend left=45] (4C);  
\end{tikzpicture}
\caption{$(P_4)_5$-join of $K_3,P_4,C_5,K_{3,3}$}
    \label{fig:figure-3}
\end{figure}

\end{example}

    In the above diagram, the newly added edges between the graphs $G_i$s are given in blue. Note that $G$ is not the $P_4$-join of  $\{K_3, P_4, C_5,K_{3,3}\}$ since, for instance, no vertex of $G_1=K_3$ is adjacent to a vertex of $G_2=P_4$ although the underlying vertices in $H=P_4$ are adjacent. We observe that the above-given graph is also a $(K_1 \sqcup P_3 )_5$-join of $\mathcal F=\{K_3, P_4, C_5,K_{3,3}\}$ with the same set of indexing maps.

    Note that if each $I_j$ for $1 \le j \le k$ is the constant function $c$ for a fixed $1\le c \le m$ , then the operation $H_m$-join of graphs coincides with the $H$-join of graphs. In particular, this is the case when $m=1$.  In a different ongoing project, a graph operation called the free-H join of graphs (where the edges between the graphs $G_1,\dots,G_k$ can be made completely arbitrarily) is being studied \cite{agg}.
\subsection{Various graph families as $H_m$-joins.}\label{Otherfamilies}
    
In this section, we will see a list of graph families which can be realized as $H_m$-joins of graphs.

\begin{enumerate}[leftmargin=*]
    \item The Cartesian product of any two graphs $A_1\times A_2$ are $H_m$-joins.
    
    Let $A_1$ and $A_2$ be graphs with $V(A_1)=\{u_1, u_2, \dots, u_{n_1} \}$ and $V(A_2)=\{v_1, v_2, \dots, v_{n_2}\}$. 
    Let $G$ be the $(A_1)_{n_2}$-join of $\{G_1, G_2, \dots, G_{n_1} \}$ with $G_1=G_2=\dots=G_{n_1}=A_2$ with $V(G_i)=\{v_i^1,v_i^2,\dots,v_i^{n_2}\}$ for each $i\in[n_1]$ and indexing maps $I_1, I_2, \dots, I_{n_1}$ defined by  $I_i(v_i^j)=j$ for each $i\in [n_1], \ j \in [n_2]$ and .  
    Identifying the vertex $v_i^j$ of $G$ with $(u_i,v_j)$ of $A_1 \times A_2$, we see that $G\cong A_1 \times A_2$.   Similarly, $A_1 \times A_2$ is $(A_2)_{n_1}$-join of $n_2$ number of $G_1$.

We emphasize that the Ladder graphs $P_n \times P_2$, more generally the Planar Grids $P_n \times P_m$, Prisms $C_n \times C_m$, Books $S_n \times P_2$, where $S_n$ is the star graph on $n$ vertices, Stacked books $S_n \times P_m$, the hypercube $Q_n$, Generalized books $S_{2n} \times Q_m$ and Generalized ladders $P_{2m+1}\times Q_n$ are some important graph families which arise as the  Cartesian product of two graphs.

\item \textit{Generalized Petersen graph $P(n,k)$}\cite{peter}

The generalized Petersen graph $P(n,k)$, for $n \geq 5 \text{ and } k < \frac{n}{2}$, is the graph with the vertex set $V(P(n,k))=\{ a_i,b_i : 0 \leq i \leq n-1\}$, and edge set $E(P(n,k))=\{a_ia_{i+1}, a_ib_i, b_ib_{i+k} : 0 \leq i \leq n-1\}$,
where the subscripts are expressed as integers modulo $n$. 
Let $AP(n,k)$ be the induced subgraph of $P(n,k)$ generated by the vertices $\{a_i : 0 \leq i \leq n-1\}$ and $BP(n,k)$ be the induced subgraph generated by the vertices $\{b_i : 0 \leq i \leq n-1\}$. By \cite[Lemma 2.1]{peter}, $AP(n,k) \cong C_n$ and $BP(n,k) \cong d C_{\frac{n}{d}}$, the disjoint union of $d$ many cycle graph $C_{\frac{n}{d}}$, where $d=\gcd(n,k)$.  From this information, we observe that $P(n,k)$ is the $(K_2)_n$-join of $C_n$ and $dC_\frac{n}{d}$ with indexing maps $I_1(a_i)=I_2(b_i)=i+1$ for each  $0 \leq i \leq n-1$.

\item \textit{Generalized helm graph $H_n^m$.}\cite{gallian}

The Wheel graph $W_n$ on $n+1$ vertices is the $K_2$-join of $C_n$ and $K_1$. A Helm graph $H_n$ is obtained from the Wheel graph $W_n$ by attaching a pendant edge to each vertex of the cycle $C_n$ in $W_n$. The Generalized helm graph $H_n^m$ can be obtained by attaching the path graph $P_{m+1}$ on each vertex of $C_n$ in $W_n$  by a bridge. It is clear that the graph $H_n^m$ is a $(K_2)_3$-join of $W_n$ and $nP_{m+1}$, where the vertices of $C_n$ in $W_n$ and the $n$ pendant vertices of the $n$ copies of $P_{m+1}$ which are attached to $W_n$ are indexed by $1$, the remaining vertex in $W_n$ is indexed by $2$ and the remaining vertices in $nP_{m+1}$ are indexed by $3$.

\item \textit{Genaralized web graph $W(t,n)$.}\cite{gallian}

 A web graph $W(2,n)$ is obtained from the Helm graph $H_n$ by joining the pendant vertices of the Helm graph to form a cycle and attaching a pendant edge at each vertex of this outer cycle. The generalized web graph $W(t,n)$ is obtained by iterating $t$ times this process of joining the pendant vertices to form a cycle and attaching a pendant edge at each vertex of this outer cycle. Note that $\lvert V(W(t,n)\rvert = (t+1)n+1.$ 
Let $V(W_n)=\{v_1,v_2,\dots,v_n,c\}$ where $c$ is the vertex of $K_1$ in $W_n$. Consider $t$ number of copies of $C_n$: $C^1,\dots,C^t$ with $V(C^i)=\{v_i^1,\dots,v_i^n\}$ for $i\in[t]$. Consider also a copy of $\overline{K_n}$ with $V(\overline{K_n})=\{v_{t+1}^1,v_{t+1}^2,\dots,v_{t+1}^n\}$. 
Now define $I_1(c)=1, I_1(v_i)=i+1$ for $i\in[n]$ and define $I_j(v_j^k)=k+1$ for $2 \leq j \leq t+2,1 \leq k \leq n $. Now, we see that the graph $W(t,n)$ is a $(P_{t+2})_{n+1}$-join of the graphs $W_n,C^1\cong C_n,\dots,C^t\cong C_n,\overline{K_n}$ with respect to the maps $I_1, I_2,\dots,I_{t+2}$.

\item \textit{Lollipop graph $L_{m,n}$ \cite{lollipop} and Tadpole graph $T_{m,n}$}\cite{tadpole}.

The lollipop graph with parameters $(m,n)$ is obtained by considering a complete graph $K_m$ and a path graph $P_n$ and connecting a pendant vertex $u$ of $P_n$ to any vertex $v$ of $K_m$. Clearly, $L_{m,n}$ is a $(K_2)_3$-join of $K_m$ and $P_n$ with $I_1(V(K_m)-\{v\})=\{1\}, I_1(v)=2=I_2(u), I_2(V(P_n)-\{u\})=\{3\}$. The Tadpole graph $T_{m,n}$ is defined in a similar way to the Lollipop graph $L_{m,n}$ where we consider the cycle graph $C_m$ in place of the complete graph $K_m$. 

\end{enumerate}
    In what follows, we study the universal spectrum of graphs obtained by this $H_m$-join operation. As a first step, in the next subsection, we prove some basic lemmas that will be helpful for the rest of the paper.

\subsection{Adjacency matrix of $H$-join and $H_m$-join of graphs} 

 Consider a graph $H$ on vertices $\{v_1,v_2,\dots,v_k\}$ and a family of graphs $\mathcal{F}=\{G_1,G_2, \hdots, G_k\}$ with $V(G_i)=\{v_i^1,v_i^2,\hdots,v_i^{n_i} \}$. Let $A_i$ be the adjacency matrix of $G_i$ and $\rho_{i,j}=\begin{cases}
    1 & \text{if} \ v_iv_j\in E(H),\\
    0 & \text{otherwise}
\end{cases}$. Then, the adjacency matrix of the $H$-join $\bigvee_H^{\mathcal{F}}$ is given by  
 \begin{equation} \label{AofG}
 A(\bigvee_H^{\mathcal{F}}) = \begin{bmatrix}
A_1  & \rho_{1,2} \textbf{1}_{n_1}\textbf{1}_{n_2}^t & \cdots &  \rho_{1,k}\textbf{1}_{n_1} \textbf{1}_{n_k}^t \\ \rho_{2,1}
\textbf{1}_{n_2} \textbf{1}_{n_1}^t & A_2 & \cdots & \rho_{2,k}\textbf{1}_{n_2} \textbf{1}_{n_k}^t \\
\vdots & \vdots & \ddots & \vdots  \\ \rho_{k,1}
\textbf{1}_{n_k} \textbf{1}_{n_1}^t &\rho_{k,2}\textbf{1}_{n_k} \textbf{1}_{n_2}^t & \cdots & A_k
\end{bmatrix}.
  \end{equation}

In \cite[Theorem 2]{msg}, the following result discussing the adjacency spectrum of the above matrix is proven.

 \begin{thm}\label{premain}
	Let $M_i$ be a complex matrix of order $n_i$ and let $u_i$ and $v_i$ be arbitrary complex vectors of size $n_i \times 1$ for $1 \le i \le k$. Let $n=\displaystyle \Sigma_{i=1}^k n_i.$ Let $\rho_{i,j}$ be arbitrary complex numbers for $1 \le i,j \le k$ and $i \ne j$. For each $1 \le i \le k$, let $\phi_i(\lambda)=\det(\lambda I_{n_i}-M_i)$ be the characteristic polynomial of the matrix $M_i$ and $\Gamma_i= \Gamma_{M_i}(u_i,v_i) = v_i^t (\lambda I - M_i)^{-1} u_i$. 
	Let $\bold M$ be the $k$-tuple $(M_1, M_2, \dots, M_k)$, $\bold u$ be the 2$k$-tuple $(u_1,v_1, u_2,v_2 \dots, u_k,v_k)$ and $\rho$ be the ${k(k-1)}$-tuple $ (\rho_{1,2}, \rho_{1,3} \dots, \rho_{1,k},\rho_{2,1}, \rho_{23}, \dots, \rho_{2,k}, \dots, \rho_{k,1}, \rho_{k,2}, \dots,  \rho_{k-1 k})$.
	 Considering $\bold M$, $\bold u$ and $\rho$, the following matrices are defined: $$A(\bold M, \bold u, \rho) := \begin{bmatrix}
	
	M_1 & \rho_{1,2} u_1v_2^t & \cdots  & \rho_{1,k} u_1v_k^t \\
	\rho_{2,1} u_2v_1^t & M_2 & \cdots  & \rho_{2,k} u_2v_k^t \\
	\vdots & \vdots & \ddots  & \vdots \\
	\rho_{k,1} u_kv_1^t & \rho_{k,2} u_kv_2^t & \cdots  &M_k
\end{bmatrix}$$ $$\text{ and }
 \widetilde{A}(\bold M, \bold u, \rho) :=  \begin{bmatrix}
1 & -\rho_{1,2}\Gamma_1 & \cdots & -\rho_{1,k}\Gamma_1 \\
-\rho_{2,1}\Gamma_2  & 1 & \cdots & -\rho_{2,k}\Gamma_2 \\
\vdots & \vdots & \ddots & \vdots  \\
-\rho_{k,1}\Gamma_{k}& -\rho_{k,2}\Gamma_{k}& \cdots &1
\end{bmatrix}.$$

Then the characteristic polynomial of $A(\bold M, \bold u, \rho)$ is given by
\begin{equation}\label{maineqn}
\det(\lambda I_n - A(\bold M, \bold u, \rho)) = \Bigg( \Pi_{i=1}^k \phi_i(\lambda)\Bigg) \det(\widetilde{A}(\bold M, \bold u, \rho)) .
\end{equation} 
\end{thm}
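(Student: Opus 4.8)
The plan is to exhibit the off-diagonal part of $A(\bold M, \bold u, \rho)$ as a product of three matrices of small ``width'' $k$, and then collapse the $n \times n$ determinant to a $k \times k$ one via the Sylvester determinant identity (equivalently, the matrix determinant lemma). First I would record a block decomposition. Let $D = \diag(M_1, \dots, M_k)$ be the block-diagonal matrix, let $U$ be the $n \times k$ matrix whose $i$-th column is $u_i$ placed in the rows indexed by the $i$-th block (and zero elsewhere), let $V$ be the analogous $n \times k$ matrix built from the $v_i$, and let $R = (R_{ij})$ be the $k \times k$ matrix with $R_{ij} = \rho_{i,j}$ for $i \ne j$ and $R_{ii} = 0$. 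A direct block computation shows that the $(i,j)$ block of $U R V^t$ is $\rho_{i,j}\, u_i v_j^t$ for $i \ne j$ and is $0$ on the diagonal, so that $A(\bold M, \bold u, \rho) = D + U R V^t$ and hence $\lambda I_n - A(\bold M, \bold u, \rho) = X - U R V^t$, where $X := \lambda I_n - D$.

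Next, assume first that $\lambda \notin \bigcup_{i=1}^k \spec(M_i)$, so that $X$ is invertible with $X^{-1} = \diag((\lambda I_{n_1} - M_1)^{-1}, \dots, (\lambda I_{n_k} - M_k)^{-1})$ and $\det X = \prod_{i=1}^k \phi_i(\lambda)$. Writing $-U R V^t = (-UR)(V^t)$ and applying the identity $\det(X + PQ) = \det(X)\det(I_k + Q X^{-1} P)$ with $P = -UR$ and $Q = V^t$, I obtain $\det(\lambda I_n - A(\bold M, \bold u, \rho)) = \det(X)\,\det(I_k - (V^t X^{-1} U)R)$. The key computation is that $V^t X^{-1} U$ is the \emph{diagonal} $k \times k$ matrix with entries $v_i^t(\lambda I_{n_i} - M_i)^{-1} u_i = \Gamma_i$: the $i$-th row of $V^t X^{-1}$ is supported on the $i$-th block of columns while the $j$-th column of $U$ is supported on the $j$-th block of rows, so they interact only when $i = j$. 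Denoting $\Gamma = \diag(\Gamma_1, \dots, \Gamma_k)$, the $(i,j)$ off-diagonal entry of $I_k - \Gamma R$ is $-\Gamma_i \rho_{i,j} = -\rho_{i,j}\Gamma_i$ and its diagonal entries are $1$; that is, $I_k - \Gamma R = \widetilde{A}(\bold M, \bold u, \rho)$, which gives \eqref{maineqn}.

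Finally, I would remove the genericity hypothesis on $\lambda$ by a polynomial-identity argument. The left-hand side of \eqref{maineqn} is a polynomial in $\lambda$. On the right-hand side, each $\Gamma_i$ equals $v_i^t \operatorname{adj}(\lambda I_{n_i} - M_i) u_i / \phi_i(\lambda)$, so $\phi_i(\lambda)\Gamma_i$ is a polynomial; expanding $\det(\widetilde{A}(\bold M, \bold u, \rho))$ over permutations, every summand is a product of factors $-\rho_{i,\sigma(i)}\Gamma_i$ over a set of \emph{distinct} row indices $i$, so multiplying by $\prod_{i=1}^k \phi_i(\lambda)$ clears all denominators and renders the right-hand side a polynomial as well. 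Two polynomials that agree off the finite set $\bigcup_i \spec(M_i)$ agree identically, which completes the argument.

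The step I expect to be most delicate is the choice of factorization in the determinant identity: splitting $-U R V^t$ as $(-UR)(V^t)$ rather than as $(U)(-RV^t)$ is what makes $\Gamma_i$ (the \emph{row} index) rather than $\Gamma_j$ appear, so that the reduced $k \times k$ matrix is exactly $\widetilde{A}(\bold M, \bold u, \rho)$ as displayed. The two splittings yield equal determinants, but only this one reproduces the stated matrix entrywise.
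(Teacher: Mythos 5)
Your proof is correct, but it takes a genuinely different route from the one used in this paper. Theorem~\ref{premain} is imported from \cite{msg} without proof here, and the paper's own argument for its generalization (Theorem~\ref{mainn}) proceeds by induction on $k$: it peels off the last diagonal block with a Schur complement (Lemma~\ref{lem2}), absorbs the resulting rank corrections into the remaining off-diagonal blocks, and applies Lemma~\ref{lem3}(b) once per inductive step. You instead package the entire off-diagonal part as a single rank-$\le k$ perturbation $A(\bold M,\bold u,\rho)=D+URV^{t}$ and collapse the determinant in one application of $\det(X+PQ)=\det(X)\det(I_k+QX^{-1}P)$; the block-support argument showing $V^{t}X^{-1}U=\diag(\Gamma_1,\dots,\Gamma_k)$ is the crux and is correct, as is your observation that the splitting $(-UR)(V^{t})$ rather than $(U)(-RV^{t})$ is what makes the reduced matrix equal $\widetilde{A}(\bold M,\bold u,\rho)$ entrywise rather than merely in determinant. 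Your approach is shorter and makes the structural reason for the reduction ($A$ is a bounded-rank perturbation of a block-diagonal matrix) transparent, and it handles the genericity of $\lambda$ explicitly via the clearing-of-denominators and polynomial-identity step, a point the paper's treatment leaves implicit. The paper's induction, on the other hand, adapts more directly to the setting of Theorem~\ref{mainn}, where the vectors $u_i,v_i$ are replaced by $n_i\times m$ matrices $E_i$ and the reduced matrix is $km\times km$ rather than $k\times k$ --- though your argument also generalizes verbatim to that setting by taking $U$ and $V$ of width $km$ and $R$ the Kronecker product of $(\rho_{i,j})$ with $I_m$.
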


Let $G$ be the $H_m$-join of the collection of graphs $\mathcal F = \{G_1, G_2,\hdots,G_k \}$ with indexing maps $\{I_i: 1 \le i \le k\}$ and $V(G_i)=\{v_i^1,v_i^2,\hdots,v_i^{n_i} \}$. Then with respect to the ordering $\{v_1^1,v_1^2,\hdots,v_1^{n_1},v_2^1,\hdots,v_2^{n_2},\hdots,v_k^1,\dots,v_k^{n_k}\}$, let the adjacency matrix of $G$ be as follows.

\begin{equation} \label{gen form}
    A(G)=\begin{bmatrix}
    A(G_1) &  B_{12} & \cdots  &  B_{1k} \\
			 B_{12}^t & A(G_2) & \cdots &  B_{2k}  \\
		\vdots  & \vdots  & \ddots   & \vdots\\
			B_{1k}^t	  & B_{2k}^t	 & \cdots \cdots  &A(G_k)
\end{bmatrix}
\end{equation}
where $B_{ij}$ are $0-1$ matrices of size $n_i\times n_j$.

For example, let $m =2, H = K_2, G_1 = P_3, G_2=P_4, I_1^{-1}(1)= \{v_1^1, v_1^2\}, I_1^{-1}(2)= \{v_1^3\}, I_2^{-1}(1) = \{v_2^1, v_2^2, v_2^3 \},$ and $ I_2^{-1}(2) = \{v_2^4\}$. Let $G = \bigvee_{H,\mathcal I}^{\mathcal{F}}$ be the $H_m$-join, then in the above mentioned ordering of the vertices of $G$,

$$ A(G)= 
  \left[\begin{array}{@{}ccc|cccc@{}}
    0 & 1 & 0 & 1 & 1 & 1 & 0 \\
    1 & 0 & 1 & 1 & 1 & 1 & 0 \\
    0 & 1 & 0 & 0 & 0 & 0 & 1 \\\hline
    1 & 1 & 0 & 0 & 1 & 0 & 0 \\
    1 & 1 & 0 & 1 & 0 & 1 & 0 \\
    1 & 1 & 0 & 0 & 1 & 0 & 1 \\
    0 & 0 & 1 & 0 & 0 & 1 & 0\\
  \end{array}\right].
 $$

\begin{rem}
We observe that neither of the non-diagonal blocks in the above matrix $A(G)$ is an all-one matrix. Hence, generally, the adjacency matrix of a graph $G$ obtained as an $H_m$-join is not necessarily in the form given in Equation \eqref{AofG}. Hence, in general, Theorem \ref{premain} cannot be used to calculate the spectrum of the graphs obtained from the $H_m$-join operation. However, we will show that the adjacency matrix of a graph $G$ obtained as an $H_m$-join has a similar form as in Equation \eqref{AofG}, which we discuss below.  
\end{rem}

\begin{lem} \label{Adjlem}
The adjacency matrix $A(G)$ given in \eqref{gen form} has the following form.
\begin{equation}\label{AoomfG}
A(G)=
\begin{bmatrix}
    A(G_1) &  \rho_{1,2}E_1E_2^t & \cdots  &  \rho_{1,k}E_1E_k^t \\
			\rho_{2,1} E_2 E_1^t & A(G_2) & \cdots &  \rho_{2,k}E_2E_k^t  \\
			\vdots  & \vdots  & \ddots   & \vdots\\
			\rho_{k,1} E_kE_1^t & \rho_{k,2} E_kE_2^t & \cdots \cdots  &A(G_k)
\end{bmatrix}
\end{equation}
    where for $1 \le j \le k$, $E_i$ is the $n_i \times m$ matrix  defined by $
(E_i)_{st}=\begin{cases}
    1 & \text{if}\ I_i(v_i^s)=t \\
    0 & \text{otherwise.}
\end{cases}$ and $\rho_{i,j}=\begin{cases}
    1 & \text{if} \ v_iv_j\in E(H)\\
    0 & \text{otherwise}
\end{cases}$.

\end{lem}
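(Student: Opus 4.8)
The plan is to compute the off-diagonal blocks $B_{ij}$ of $A(G)$ entry by entry directly from the definition of the $H_m$-join, and match them against $\rho_{i,j}E_iE_j^t$. The diagonal blocks require no work: by part (1) of the definition the vertex set of $G$ is the disjoint union of the $V(G_i)$, and by part (2) the edges internal to each $G_i$ are exactly those of $G_i$, so the $i$-th diagonal block is $A(G_i)$ as claimed in \eqref{AoomfG}.

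For the off-diagonal blocks, I would fix $i \ne j$ with $i < j$ and examine the $(p,q)$ entry of $B_{ij}$, which by \eqref{gen form} is the indicator of the event that $v_i^p \in V(G_i)$ is adjacent to $v_j^q \in V(G_j)$ in $G$. By part (2) of the definition, this edge is present precisely when $v_iv_j \in E(H)$ and $I_i(v_i^p) = I_j(v_j^q)$. The first condition is recorded by the scalar $\rho_{i,j}$, so it remains to show that $E_iE_j^t$ has $(p,q)$ entry equal to $1$ exactly when $I_i(v_i^p) = I_j(v_j^q)$, and $0$ otherwise.

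The key computation is the expansion
\[
(E_iE_j^t)_{pq} = \sum_{t=1}^m (E_i)_{pt}(E_j)_{qt}.
\]
Here I would use the defining property of the indexing matrices: each row of $E_i$ (resp. $E_j$) is a standard basis vector of $\mathbb R^m$, since every vertex is sent by $I_i$ (resp. $I_j$) to exactly one element of $[m]$. Thus $(E_i)_{pt}(E_j)_{qt}=1$ holds for the single index $t = I_i(v_i^p)$ provided this value equals $I_j(v_j^q)$, and the product vanishes for every $t$ otherwise. Hence the sum equals $1$ when $I_i(v_i^p)=I_j(v_j^q)$ and $0$ otherwise, which is exactly the desired indicator. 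Multiplying through by $\rho_{i,j}$ gives $B_{ij} = \rho_{i,j}E_iE_j^t$, and transposing yields $B_{ij}^t = \rho_{i,j}E_jE_i^t = \rho_{j,i}E_jE_i^t$ for the lower-triangular blocks, using $\rho_{i,j}=\rho_{j,i}$ since $H$ is undirected. This matches \eqref{AoomfG} in every block.

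Since each step is a direct unwinding of definitions, I do not anticipate a genuine obstacle; the only point that needs care is the observation that the rows of the indexing matrices are standard basis vectors, which is precisely what collapses the sum in the matrix product to a single matching term and turns $E_iE_j^t$ into the index-agreement indicator matrix.
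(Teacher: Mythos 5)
Your proposal is correct and follows essentially the same route as the paper's proof: both compute $(E_iE_j^t)_{pq}=\sum_{t=1}^m (E_i)_{pt}(E_j)_{qt}$ and identify it as the indicator of $I_i(v_i^p)=I_j(v_j^q)$, hence equal to $(B_{ij})_{pq}$ up to the factor $\rho_{i,j}$. Your version just spells out the supporting observations (rows of $E_i$ are standard basis vectors, diagonal blocks, symmetry of the lower-triangular blocks) that the paper leaves implicit.
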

    
 \begin{pf} 

Note that, \begin{align*}
(E_iE_j^t)_{rs} &= \sum\limits_{q=1}^m (E_i)_{rq} (E_j^t)_{qs}\\
&=\begin{cases}
    1& \text{if}\ I_i(v_i^r)=I_j(v_j^s)\\
    0 & \text{otherwise.}  
\end{cases}. \\
&=(B_{ij})_{rs}.
\end{align*}
This completes the proof.
 \end{pf}
Given an $H_m$-join of graphs $\{G_1, G_2, \dots, G_k\}$ with indexing maps $I_1, I_2, \dots I_k$, the matrix $E_i$ shall be called the \it{indexing matrix} of $G_i$ for the map $I_i$. 
  \subsection{$E$-Main eigenvalues of a graph}

Recall that an eigenvalue $\lambda$ of the matrix $X(G)$ associated with the graph $G$ is called a \textit{main} eigenvalue if the eigenspace $\xi_{X(G)}(\lambda)$ is not orthogonal to the all-one vector $\bold 1_n$. Otherwise, it is called non-main.
  In the following definition, we define a generalization of main eigenvalues called the $E$-main eigenvalues for a rectangular matrix $E$. This notion plays a crucial role in our paper.

\begin{defn} \label{def E}
    Let $M$ be an $n \times n$ matrix over $\mathbb{C}$. Let $E$ be an $ n \times m$ matrix over $\mathbb{C}$. An eigenvalue $\lambda$ of $M$ is called an $E$-main eigenvalue of $M$ if the corresponding eigenspace $\xi_M(\lambda)$ is not orthogonal to the column space of $E$. Otherwise, it is called an $E$-non-main eigenvalue of $M$.
\end{defn}

 Suppose $G$ is the $H_m$-join of a family of graphs $\mathcal F = \{G_1,G_2,\dots,G_k\}$ with $ V(G_i) = \{v_i^{1},\dots,v_i^{n_i}\}$ and indexing maps $\mathcal I = \{I_1,\dots,I_k\}$.  
     For a subset $U$ of $V(G_i)$ and $1 \le s \le n_i$, the s-th coordinate of the  characteristic vector $\bold 1_U$ (of order $n_i \times 1$) is defined to be $$\bold (\bold 1_U)_s=\begin{cases}
    1 & \text{if}\ v_i^s \in U \\
    0 & \text{otherwise} \end{cases}.$$
    
   Define $U_{ij} := I_i^{-1}(j)$ which is a subset of $V(G_i)$, then 
       $V(G_i) = \sqcup_{j=1}^{m} U_{ij}$.
   Note that the indexing matrix $E_i$ is the $n_i \times m$ matrix with the characteristic vector $\bold 1_{U_{ij}}$ as its $j$-th column. We are particularly interested in the $E_i$-main eigenvalues of the adjacency matrix $A(G_i)$ in the context of $H_m$-joins owing to Lemma \ref{Adjlem}.

\begin{rem}\label{rem3}
     \begin{enumerate}  
    \item It is observed that the definition of $E_i$-main and $E_i$-non-main eigenvalues of the graph $G_i$ relies only on the graph $G_i$ and the indexing function $I_i$. In other words, this notion is unaffected by the maps in $\mathcal I \backslash \{I_i\}$ and the graphs in $\mathcal F \backslash \{G_i\}$.
   
    \item If $m=1$, then $U_{i1} = V(G_i)$ and $\bold 1_{U_{i1}} = \bold 1_{n_{i}}$.  In this case, an eigenvalue $\lambda$ of $A(G_i)$ is an $E_i$-non-main eigenvalue of $A(G_i)$ iff
     $\xi_{A(G_i)}(\lambda) \perp \bold{1}_{n_i}$ iff $\lambda$ is a non-main eigenvalue of $A(G_i)$. 
\item If every standard basis vector of $\mathbb{R}^{n_i}$ is a column of $E_i$, then $A(G_i)$ has no $E_i$-non-main eigenvalue.
\item Given a graph $G_i$ and an associated indexing matrix $E_i$, by \cite[Theorem 0.2]{sachs},  there is always an eigenvalue of $A(G_i)$ which is $E_i$-main. 
    \end{enumerate}
\end{rem}

\begin{defn}
    Let $M$ be a $n\times n$ matrix over $\mathbb{C}$. Let $U,V$ be $n\times m$ matrices over $\mathbb{C}$. Then the main function associated with $M,U,V$, denoted by $\Gamma_M(U,V) := V^t(\lambda I_n-M)^{-1}U \in M_{m}(\mathbb C(\lambda))$. When $U=V$, we write $\Gamma_M(U,V)=\Gamma_M(U)$.
\end{defn}

\section{Main Result}\label{mainresult}
 In this section, we prove the main theorem of this work. 
 
 \subsection{Proof of the main theorem} We first prove some basic lemmas.
 \begin{lem}\label{lem2}\cite{sachs}
Let $M$ be a complex matrix with block decomposition $M=
		\begin{bmatrix}
			A & B\\
			C & D
		\end{bmatrix}
		$. Then
  \begin{enumerate}[label=(\alph*)]
  \item $\det(M) = \det(A) \det(D-CA^{-1}B)$, if $A$ is invertible, and
   \item $\det(M) = \det(D) \det(A - BD^{-1}C)$, if $D$ is invertible.
  \end{enumerate}
	\end{lem}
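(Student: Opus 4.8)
The plan is to prove both parts by exhibiting an explicit block triangular factorization of $M$ and then invoking the multiplicativity of the determinant together with the elementary fact that a block \emph{triangular} matrix has determinant equal to the product of the determinants of its diagonal blocks. No spectral input is needed; the whole argument is a formal block computation.

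For part (a), assuming $A$ is invertible, I would write down the block lower--upper factorization
\[
\begin{bmatrix} A & B \\ C & D \end{bmatrix}
=
\begin{bmatrix} I & 0 \\ CA^{-1} & I \end{bmatrix}
\begin{bmatrix} A & B \\ 0 & D - CA^{-1}B \end{bmatrix},
\]
which one checks by a single block multiplication: the $(2,1)$ block becomes $CA^{-1}A = C$ and the $(2,2)$ block becomes $CA^{-1}B + (D - CA^{-1}B) = D$, while the $(1,1)$ and $(1,2)$ blocks are visibly $A$ and $B$. The first factor is block lower triangular with identity diagonal blocks, hence has determinant $1$; the second is block upper triangular, hence has determinant $\det(A)\det(D - CA^{-1}B)$. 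Multiplicativity of $\det$ then yields exactly the claimed formula $\det(M)=\det(A)\det(D-CA^{-1}B)$.

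For part (b), assuming instead that $D$ is invertible, the symmetric factorization
\[
\begin{bmatrix} A & B \\ C & D \end{bmatrix}
=
\begin{bmatrix} A - BD^{-1}C & B \\ 0 & D \end{bmatrix}
\begin{bmatrix} I & 0 \\ D^{-1}C & I \end{bmatrix}
\]
does the job by the same reasoning: the second factor has determinant $1$, and the block upper triangular first factor contributes $\det(A - BD^{-1}C)\det(D)$, giving $\det(M)=\det(A-BD^{-1}C)\det(D)$.

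The only ingredient that is not purely formal is the block triangular determinant identity itself, e.g. $\det\begin{bmatrix} A & B \\ 0 & D \end{bmatrix} = \det(A)\det(D)$. I would regard this as standard: it follows directly from the Leibniz/Laplace expansion (every surviving permutation must map the index set of the first block row into itself because of the zero block), and it requires no invertibility hypothesis, so it applies to the relevant factors above. Consequently there is no genuine obstacle here --- the statement is the classical Schur complement identity cited to \cite{sachs}, and the entire content reduces to verifying the two displayed factorizations, which is routine. The one point worth stating carefully is the hypothesis bookkeeping: the Schur complement $D-CA^{-1}B$ in (a) requires $A^{-1}$ to exist, and $A-BD^{-1}C$ in (b) requires $D^{-1}$, so each identity is asserted only under its stated invertibility assumption.
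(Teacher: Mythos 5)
Your proof is correct. The paper itself gives no proof of this lemma --- it is simply quoted from the cited reference \cite{sachs} --- so there is nothing to compare against; your argument via the two block-triangular factorizations (both of which check out by direct block multiplication) together with the block-triangular determinant identity is the standard and complete justification of this classical Schur complement formula.
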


The following lemma is a generalization of {\cite[Lemma 6]{msg}}.
	
	\begin{lem}\label{lem3}
		Let $A$ be an $n \times n$ invertible matrix, and $U$ and $V$ be $n \times m$ matrices. Then
		
		\begin{enumerate}[label=(\alph*)]
			\item $\det(I_n+UV^t) = \det(I_m+V^tU).$ 
			\item $\det(A + UV^{t}) =  \det(A)\det(I_m + V^{t}A^{-1}U).$ 
		\end{enumerate}
	\end{lem}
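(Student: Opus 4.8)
The plan is to derive both parts as instances of the Schur complement identity already recorded in Lemma \ref{lem2}, by applying it to a single auxiliary block matrix computed in two different ways.

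For part (a), I would introduce the $(n+m) \times (n+m)$ block matrix
$$M = \begin{bmatrix} I_n & -U \\ V^t & I_m \end{bmatrix}.$$
Since the top-left block $I_n$ is invertible, Lemma \ref{lem2}(a) gives $\det(M) = \det(I_n)\det(I_m - V^t I_n^{-1}(-U)) = \det(I_m + V^t U)$. On the other hand, the bottom-right block $I_m$ is invertible, so Lemma \ref{lem2}(b) gives $\det(M) = \det(I_m)\det(I_n - (-U)I_m^{-1}V^t) = \det(I_n + U V^t)$. Equating the two expressions for $\det(M)$ yields (a). The only point requiring care is the sign of the off-diagonal block: placing $-U$ rather than $U$ in the top-right is exactly what produces the $+$ sign on both sides.

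For part (b), the quickest route is to reduce to (a). Since $A$ is invertible, I would factor $A + UV^t = A(I_n + A^{-1}UV^t)$, so that $\det(A + UV^t) = \det(A)\det(I_n + (A^{-1}U)V^t)$. Applying part (a) with $A^{-1}U$ in place of $U$ converts $\det(I_n + (A^{-1}U)V^t)$ into $\det(I_m + V^t A^{-1} U)$, which gives the claimed formula. Alternatively, one can repeat the block-matrix argument directly on $\begin{bmatrix} A & -U \\ V^t & I_m \end{bmatrix}$, using the invertibility of $A$ in Lemma \ref{lem2}(a) and of $I_m$ in Lemma \ref{lem2}(b), and equating.

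There is no serious obstacle here: the content is entirely the two-sided Schur complement computation, and the generalization from the rank-one case (the $m=1$ version in \cite[Lemma 6]{msg}) to rectangular $U,V$ is automatic once the block matrix is set up, precisely because Lemma \ref{lem2} is already stated for arbitrary rectangular off-diagonal blocks. The only thing to get right is the bookkeeping, namely the orientation $V^t U$ versus $U V^t$ and the sizes $I_n$ versus $I_m$ appearing in each determinant.
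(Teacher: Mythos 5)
Your proof is correct and follows essentially the same route as the paper: part (a) via the two Schur-complement evaluations of an auxiliary block matrix (the paper uses $\begin{bmatrix} I_m & -V^t \\ U & I_n \end{bmatrix}$, which is just your matrix with the block roles swapped), and part (b) by factoring out $A$ and invoking part (a). No gaps; the sign bookkeeping you flag is handled correctly.
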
	
	\begin{pf} 
 
\begin{eqnarray}
\begin{aligned}
\det(I_n+UV^t) 
& = \det\begin{bmatrix}
			I_m & -V^t \\ U & I_n 
		\end{bmatrix},\text{by Lemma} \ \ref{lem2}\text{(a)},\\
& =  \det(I_{m}+V^tU),\text{by Lemma} \ \ref{lem2}\text{(b)}.
\end{aligned}
\end{eqnarray}
\begin{eqnarray}
\begin{aligned}
\det(A + UV^{t}) &= \det(A(I_n+A^{-1}UV^t)),\\
&= \det(A)\det(I_n+(A^{-1}U)V^t),\\
&= \det(A)\det(I_m+V^tA^{-1}U), \text{by part (a)}. 
\end{aligned}
\end{eqnarray} \end{pf}

 The following is our main theorem, which expresses the spectrum of the adjacency matrix of a graph $G$ obtained as an $H_m$ join of graphs.

\begin{thm}\label{mainn}
 Consider a graph $H$ with vertex set $\{v_1,\dots,v_k\}$ and a family of graphs $\mathcal{F}=\{G_1,G_2, \hdots, G_k\}$ with $V(G_i)=\{v_i^1,v_i^2,\hdots,v_i^{n_i} \}$ along with the indexing maps $\mathcal I = \{I_1,I_2,\dots,I_k \}$. Let $\{E_1, E_2,\dots, E_k \}$ be the associated indexing matrices. Let $G$ be the $H_m$-join of the family of graphs $\mathcal F$ with respect to $\mathcal I$. For $1\leq i,j \leq k,$ let $\rho_{i,j}=\begin{cases}
     1 & \text{if} \ v_i v_j \in E(H),\\
     0 & \text{otherwise.}
 \end{cases}$
 For $1\leq i \leq k$, let $\phi_{A(G_i)}(\lambda)=\phi_i := \det(\lambda I_{n_i}-A(G_i))$ be the characteristic polynomial of matrix $A(G_i)$.  Let $n := \sum_{i=1}^k n_i$ and $\Gamma_i: = \Gamma_{A(G_i)}(E_i)$. Then 
 \begin{equation}\label{maine}
			\det(\lambda I_n - A(G)) = \Bigg( \prod_{i=1}^k \phi_i \Bigg) \det(\widetilde{A}) .
		\end{equation}  where
 $$
		\widetilde{A} :=  \begin{bmatrix}
			I_m & -\rho_{1,2}{\Gamma_1}  & \cdots & -\rho_{1,k}{\Gamma_1}  \\
			-\rho_{2,1}{\Gamma_2}  & I_m & \cdots & -\rho_{2,k}{\Gamma_2} \\
			\vdots & \vdots & \ddots & \vdots  \\
			-\rho_{k,1}{\Gamma_k} & -\rho_{k,2}{\Gamma_k} & \cdots &I_m
		\end{bmatrix}.$$

\end{thm}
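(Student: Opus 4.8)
The plan is to apply the block-matrix determinant machinery developed in Lemma~\ref{lem3} to the adjacency matrix $A(G)$ in the factored form established in Lemma~\ref{Adjlem}. The key observation is that the off-diagonal blocks of $A(G)$ are all of the low-rank form $\rho_{i,j} E_i E_j^t$, so the entire matrix $\lambda I_n - A(G)$ should be writable as a block-diagonal part minus a single product $U V^t$ where $U$ and $V$ are built out of the indexing matrices $E_i$. Concretely, I would first separate the diagonal blocks $\lambda I_{n_i} - A(G_i)$, whose determinant gives the factor $\prod_{i=1}^k \phi_i$, from the off-diagonal coupling.

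First I would set $D := \operatorname{diag}(\lambda I_{n_1} - A(G_1), \dots, \lambda I_{n_k} - A(G_k))$, the block-diagonal matrix of the individual resolvent-like blocks, so that $\det(D) = \prod_{i=1}^k \phi_i$. The matrix $\lambda I_n - A(G)$ then equals $D$ minus the matrix of off-diagonal blocks $\rho_{i,j} E_i E_j^t$. The crucial step is to express this off-diagonal matrix as a product $U V^t$ (or a sum of rank-$m$ pieces) with $U, V$ of size $n \times (km)$, where $U$ is the block-diagonal-in-a-wider-sense arrangement of the $E_i$ and $V$ encodes the $\rho_{i,j}$ coefficients. Writing $\mathcal E = \operatorname{diag}(E_1, \dots, E_k)$ (an $n \times km$ matrix with $E_i$ in the $i$-th diagonal block), one checks that the off-diagonal coupling equals $\mathcal E \, (R \otimes \text{something}) \, \mathcal E^t$ where $R = (\rho_{i,j})$ is the adjacency matrix of $H$; the absence of diagonal terms (since $\rho_{i,i}=0$) is exactly what makes this clean. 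Then $\lambda I_n - A(G) = D - \mathcal E \, \widehat R \, \mathcal E^t$ for an appropriate $km \times km$ matrix $\widehat R$.

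Next I would apply Lemma~\ref{lem3}(b) with $A = D$ (invertible for generic $\lambda$, hence as an identity of rational functions in $\lambda$), $U = \mathcal E \widehat R$ and $V = \mathcal E$, or more symmetrically absorb the sign. This yields
\begin{equation*}
\det(\lambda I_n - A(G)) = \det(D)\,\det\bigl(I_{km} - \mathcal E^t D^{-1} \mathcal E \, \widehat R\bigr).
\end{equation*}
The block structure of $D^{-1}$ is itself block-diagonal with $i$-th block $(\lambda I_{n_i} - A(G_i))^{-1}$, so the $(i,j)$ block of $\mathcal E^t D^{-1} \mathcal E$ is nonzero only when $i=j$ and equals $E_i^t (\lambda I_{n_i} - A(G_i))^{-1} E_i$, which is precisely $\Gamma_i = \Gamma_{A(G_i)}(E_i)$ by definition. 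Combining with $\widehat R$ carrying the $\rho_{i,j}$ entries, the $km \times km$ determinant factor should collapse exactly into $\det(\widetilde A)$ after identifying the $(i,j)$ block as $-\rho_{i,j} \Gamma_i$ on the off-diagonal and $I_m$ on the diagonal.

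The main obstacle I anticipate is bookkeeping the tensor/block index conventions correctly so that the matrix $\widehat R$ and the placement of $\Gamma_i$ versus $\Gamma_j$ come out matching $\widetilde A$ rather than its transpose or a variant with $\Gamma_j$ in the $(i,j)$ slot. One must be careful that $\Gamma_i$ (depending on row index $i$) appears in every entry of the $i$-th block row, which is what forces the particular asymmetric form of $\widetilde A$; getting this to fall out of $\mathcal E^t D^{-1} \mathcal E \widehat R$ requires choosing $U$ and $V$ so that the $D^{-1}$ resolvent attaches to the correct side. A secondary technical point is justifying the computation as an identity of polynomials: $D$ is invertible only for $\lambda$ outside the spectra of the $A(G_i)$, but since both sides of \eqref{maine} are polynomials (the apparent poles of the $\Gamma_i$ are cancelled by the $\phi_i$ prefactor), the identity extends to all $\lambda$ by continuity, exactly as in the proof of Theorem~\ref{premain}.
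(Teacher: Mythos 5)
Your proposal is correct, and it takes a genuinely different (and shorter) route than the paper. The paper proves the identity by induction on $k$: it peels off the last diagonal block via the Schur complement (Lemma~\ref{lem2}(b)), rewrites the resulting $(k-1)$-block Schur complement as a block-diagonal matrix minus a product involving the $E_i$, applies Lemma~\ref{lem3}(b), and then reassembles $\det(\widetilde A)$ with one more application of Lemma~\ref{lem2}. You instead package \emph{all} of the off-diagonal coupling into a single low-rank update: with $\mathcal E=\operatorname{diag}(E_1,\dots,E_k)$ of size $n\times km$ and $\widehat R=A(H)\otimes I_m$ (i.e.\ the $km\times km$ block matrix with $(i,j)$ block $\rho_{i,j}I_m$), one indeed has $\lambda I_n-A(G)=D-\mathcal E\,\widehat R\,\mathcal E^t$, since $(\mathcal E\widehat R\mathcal E^t)_{ij}=\rho_{i,j}E_iE_j^t$ and $\rho_{i,i}=0$ kills the diagonal blocks. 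A single application of Lemma~\ref{lem3}(b) (with $km$ in place of $m$, which the lemma permits) then gives $\det(D)\det(I_{km}-\mathcal E^tD^{-1}\mathcal E\,\widehat R)$; since $\mathcal E^tD^{-1}\mathcal E=\operatorname{diag}(\Gamma_1,\dots,\Gamma_k)$, the $(i,j)$ block of the second factor is $I_m$ on the diagonal and $-\rho_{i,j}\Gamma_i$ off it, which is exactly $\widetilde A$ — your worry about $\Gamma_i$ versus $\Gamma_j$ resolves correctly with your choice of attaching $D^{-1}$ on the left of $\widehat R$ (and the other choice would only replace $\widetilde A$ by a matrix with the same determinant, via $\det(I-XY)=\det(I-YX)$). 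Your closing remark about the identity holding as rational functions for $\lambda$ off the spectra of the $A(G_i)$ is the same justification the paper implicitly uses. The trade-off: your argument is a one-step global computation that avoids induction entirely, while the paper's inductive Schur-complement computation produces the intermediate matrices explicitly at each stage; both rest on the same two lemmas.
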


\begin{pf}
    From Equation (\ref{AoomfG}), we have
\begin{equation*}
A(G)=
\begin{bmatrix}
    A(G_1) &  \rho_{1,2}E_1E_2^t & \cdots  &  \rho_{1,k}E_1E_k^t \\
			\rho_{2,1} E_2 E_1^t & A(G_2) & \cdots &  \rho_{2,k}E_2E_k^t  \\
			\vdots  & \vdots  & \ddots   & \vdots\\
			\rho_{k,1} E_kE_1^t & \rho_{k,2} E_kE_2^t & \cdots \cdots  &A(G_k)
\end{bmatrix}
    \end{equation*} We shall prove the claim by induction on $k$.
For $k=2$, using Lemma \ref{lem2}(b),  we obtain
		\begin{align*}
			\begin{vmatrix}
				\lambda I_{n_1}-A(G_1) & -\rho_{1,2}E_1E_2^t \\
				-\rho_{2,1}E_2E_1^t & \lambda I_{n_2}- A(G_2)
			\end{vmatrix} &=  \phi_2 \det \big((\lambda I_{n_1} - A(G_1)) - \rho_{1,2}\rho_{2,1}E_1 E_2^t (\lambda I_{n_2}-A(G_2))^{-1}E_2 E_1^t\big)\\
			&= \phi_2\det \big((\lambda I_{n_1} - A(G_1) - \rho_{1,2}\rho_{2,1}(E_1 \Gamma_2) E_1^t)\big)\\
			&=  \phi_1 \phi_2 \det(I_{m} -  \rho_{1,2}\rho_{2,1}E_1^{t} (\lambda I_{n_1} - A(G_1))^{-1}E_1\Gamma_2), \text{ by Lemma } \ref{lem3} \text{ (b)}\\
			&= \phi_1 \phi_2 \det(I_{m}-\rho_{1,2}\rho_{2,1}\Gamma_1\Gamma_2)\\
			&= \phi_1 \phi_2 \begin{vmatrix}
				I_{m} & -\rho_{1,2}\Gamma_1 \\
				-\rho_{2,1}\Gamma_2 & I_m
			\end{vmatrix}, \text{ by Lemma } \ref{lem2}(b).
		\end{align*}

This completes the proof of the claim for $k=2.$

Now, for $k \ge 3 $, again by Lemma \ref{lem2}(b) we obtain 
		\begin{equation} \label{pfind}
			\det(\lambda I_n -A(G))=\det(\lambda I_{n_k} - A(G_k))\det(S)
		\end{equation}
		\text{where }
		\begin{eqnarray}
			\begin{aligned}
				S &=\begin{bmatrix}
					\lambda I_{n_1} -	A(G_1) & - \rho_{1,2}E_1E_2^t & \cdots & - \rho_{1,k-1}E_1E_{k-1}^t \\
					-\rho_{2,1}E_2E_1^t & \lambda I_{n_2}-A(G_1) & \cdots & -\rho_{2,k-1}E_2E_{k-1}^t \\
					\vdots & \vdots & \ddots & \vdots  \\
					- \rho_{k-1,1}E_{k-1}E_1^t & - \rho_{k-1,2}E_{k-1}E_2^t & \cdots & \lambda I_{n_{k-1}}-A(G_{k-1}) &
				\end{bmatrix}\\ 
				&-\begin{bmatrix}
					-\rho_{1,k}E_1E_k^t \\
					-\rho_{2,k}E_2E_k^t \\
					\vdots  \\
					-\rho_{k-1,k}E_{k-1}E_k^t
				\end{bmatrix} (\lambda I_{n_k} - A(G_k))^{-1}
				\begin{bmatrix}
					-\rho_{k,1}E_kE_1^t & -\rho_{k,2}E_kE_2^t & \cdots & -\rho_{k,k-1}E_{k}E_{k-1}^t
				\end{bmatrix}
			\end{aligned}
		\end{eqnarray}

 Therefore,
  
		$\det(S) = \det\Big(\begin{bmatrix}
			\lambda I_{n_1} -	A(G_1) & -\rho_{1,2}E_1E_2^t & \cdots & -\rho_{1,k-1}E_1E_{k-1}^t \\
			-\rho_{2,1}E_2E_1^t & \lambda I_{n_2}-A(G_2) & \cdots & -\rho_{2,k-1}E_2E_{k-1}^t \\
			\vdots & \vdots & \ddots & \vdots  \\
			-\rho_{k-1,1}E_{k-1}E_1^t & -\rho_{k-1,2}E_{k-1}E_2^t & \cdots & \lambda I_{n_{k-1}}-A(G_{k-1}) &
		\end{bmatrix}\\ \newline -  \begin{bmatrix} 
			\rho_{1,k}\rho_{k,1}E_1\Gamma_k E_1^t &  \rho_{1,k}\rho_{k,2} E_1\Gamma_k E_2^t & \hdots & \rho_{1,k}\rho_{k,k-1}E_1\Gamma_k E_{k-1}^t \\
			\rho_{2,k}\rho_{k,1}E_2\Gamma_k E_1^t &  \rho_{2,k}\rho_{k,2}E_2\Gamma_k E_2^t & \hdots & \rho_{2,k}\rho_{k,k-1}E_2\Gamma_k E_{k-1}^t \\
			\vdots & \vdots & \ddots & \vdots  \\
			\rho_{k-1,k}\rho_{k,1}E_{k-1}\Gamma_k E_1^t &  \rho_{k-1,k}\rho_{k,2}E_{k-1}\Gamma_k E_2^t & \hdots & \rho_{k-1,k}\rho_{k,k-1}E_{k-1}\Gamma_k E_{k-1}^t \end{bmatrix}	\Big) \\
$
		
		$= \det\Big( \begin{bmatrix}
			\lambda I_{n_1} -	A(G_1) & \bold 0 & \dots & \bold 0 \\ 
			\bold 0 &  \lambda I_{n_2}-A(G_2) &  & \bold 0\\ 
			\vdots& &  \ddots & \vdots \\ 
			\bold 0 & \bold 0 & \cdots  & \lambda I_{n_{k-1}}-A(G_{k-1}) 
		\end{bmatrix} \newline -
		\begin{bmatrix} 
			\rho_{1,k}\rho_{k,1}E_1\Gamma_k E_1^t & E_1( \rho_{1,2} + \rho_{1,k}\rho_{k,2}\Gamma_k) E_2^t & \hdots &  E_1( \rho_{1,k-1} +\rho_{1,k}\rho_{k,k-1}  \Gamma_k) E_{k-1}^t \\
			E_2( \rho_{2,1} + \rho_{2,k}\rho_{k,1} \Gamma_k) E_1^t & \rho_{2,k}\rho_{k,2}E_2\Gamma_k E_2^t & \hdots & E_2( \rho_{2,k-1}+\rho_{2,k}\rho_{k,k-1}\Gamma_k) E_{k-1}^t \\
			\vdots & \vdots & \ddots & \vdots  \\
			E_{k-1}( \rho_{k-1,1}+\rho_{k-1,k}\rho_{k,1} \Gamma_k) E_1^t & 	E_{k-1}( \rho_{k-1,2}+\rho_{k-1,k}\rho_{k,2}\Gamma_k) E_2^t & \hdots & \rho_{k-1,k}\rho_{k,k-1}E_{k-1}\Gamma_k E_{k-1}^t \end{bmatrix}	\Big)	$
		
		$= \det\Big( \begin{bmatrix}
			\lambda I_{n_1} -	A(G_1) &  &  & \\ 
			&  \lambda I_{n_2}-A(G_2) &  & \\ 
			&  &  \ddots & \\ 
			&  &   & \lambda I_{n_{k-1}}-A(G_{k-1}) 
		\end{bmatrix} \newline -
		\begin{bmatrix} 
			\rho_{1,k}\rho_{k,1}E_1\Gamma_k & E_1( \rho_{1,2} + \rho_{1,k}\rho_{k,2}\Gamma_k) & \hdots &  E_1( \rho_{1,k-1} +\rho_{1,k}\rho_{k,k-1}  \Gamma_k) \\
			E_2( \rho_{2,1} + \rho_{2,k}\rho_{k,1} \Gamma_k) & \rho_{2,k}\rho_{k,2}E_2\Gamma_k & \hdots & E_2( \rho_{2,k-1}+\rho_{2,k}\rho_{k,k-1}\Gamma_k)  \\
			\vdots & \vdots & \ddots & \vdots  \\
			E_{k-1}( \rho_{k-1,1}+\rho_{k-1,k}\rho_{k,1} \Gamma_k) & 	E_{k-1}( \rho_{k-1,2}+\rho_{k-1,k}\rho_{k,2}\Gamma_k) & \hdots & \rho_{k-1,k}\rho_{k,k-1}E_{k-1}\Gamma_k  \end{bmatrix}	\Big) \newline \times	\begin{bmatrix}
			E_1^t &  &  & \\ 
			&  E_2^t &  & \\ 
			&  &  \ddots & \\ 
			&  &   & E_{k-1}^t
			
		\end{bmatrix}\Big)$
		
		Using Lemma \ref{lem3}(b),\\
		$= {\displaystyle{\prod_{i=1}^{k-1}} \phi_i} \cdot \det\Big( I_{(k-1)m} - \begin{bmatrix}
			E_1^t &  &  & \\ 
			&  E_2^t &  & \\ 
			&  &  \ddots & \\ 
			&  &   & E_{k-1}^t
		\end{bmatrix} \newline \times
         \begin{bmatrix}  (\lambda I_{n_1} -	A(G_1))^{-1} &  &  & \\ 
			&  (\lambda I_{n_2}-A(G_2))^{-1} &  & \\ 
			&  &  \ddots & \\ 
			&  &   & (\lambda I_{n_{k-1}}-A(G_{k-1}))^{-1} 
			
		\end{bmatrix} \newline \times
		\begin{bmatrix} 
			\rho_{1,k}\rho_{k,1}E_1\Gamma_k & E_1( \rho_{1,2} + \rho_{1,k}\rho_{k,2}\Gamma_k) & \hdots &  E_1( \rho_{1,k-1} +\rho_{1,k}\rho_{k,k-1}  \Gamma_k) \\
			E_2( \rho_{2,1} + \rho_{2,k}\rho_{k,1} \Gamma_k) & \rho_{2,k}\rho_{k,2}E_2\Gamma_k & \hdots & E_2( \rho_{2,k-1}+\rho_{2,k}\rho_{k,k-1}\Gamma_k)  \\
			\vdots & \vdots & \ddots & \vdots  \\
			E_{k-1}( \rho_{k-1,1}+\rho_{k-1,k}\rho_{k,1} \Gamma_k) & 	E_{k-1}( \rho_{k-1,2}+\rho_{k-1,k}\rho_{k,2}\Gamma_k) & \hdots & \rho_{k-1,k}\rho_{k,k-1}E_{k-1}\Gamma_k  \end{bmatrix}	\Big)
		$
		
		$= {\displaystyle{\prod_{i=1}^{k-1}} \phi_i} \cdot \det\Big( I_{(k-1)m} - \newline 	\begin{bmatrix} 
			\rho_{1,k}\rho_{k,1}\Gamma_1\Gamma_k & \Gamma_1( \rho_{1,2} + \rho_{1,k}\rho_{k,2}\Gamma_k) & \hdots &  \Gamma_1( \rho_{1,k-1} +\rho_{1,k}\rho_{k,k-1}  \Gamma_k) \\
			\Gamma_2( \rho_{2,1} + \rho_{2,k}\rho_{k,1} \Gamma_k) & \rho_{2,k}\rho_{k,2}E_2\Gamma_k & \hdots & \Gamma_2( \rho_{2,k-1}+\rho_{2,k}\rho_{k,k-1}\Gamma_k)  \\
			\vdots & \vdots & \ddots & \vdots  \\
			\Gamma_{k-1}( \rho_{k-1,1}+\rho_{k-1,k}\rho_{k,1} \Gamma_k) & 	\Gamma_{k-1}( \rho_{k-1,2}+\rho_{k-1,k}\rho_{k,2}\Gamma_k) & \hdots & \rho_{k-1,k}\rho_{k,k-1}\Gamma_{k-1}\Gamma_k  \end{bmatrix}	\Big)
		$

		$= {\displaystyle{\prod_{i=1}^{k-1}} \phi_i} . \det\Big( \begin{bmatrix}
			I_m & -\rho_{1,2}\Gamma_1 & \hdots & -\rho_{1,k-1}\Gamma_1 \\
			-\rho_{2,1}\Gamma_2 & I_m & \hdots & -\rho_{2,k-1}\Gamma_2 \\
			\vdots & \vdots & \ddots & \vdots \\
			-\rho_{k-1,1}\Gamma_{k-1} & -\rho_{k-1,2}\Gamma_{k-1} & \hdots & I_m
		\end{bmatrix} - \newline \begin{bmatrix}
			\rho_{1,k}\rho_{k,1}\Gamma_1\Gamma_k & \rho_{1,k}\rho_{k,2}\Gamma_1\Gamma_k & \hdots & \rho_{1,k}\rho_{k,k-1}\Gamma_1\Gamma_k \\
			\rho_{2,k}\rho_{k,1}\Gamma_2\Gamma_k & \rho_{2,k}\rho_{k,2}\Gamma_2\Gamma_k & \hdots & \rho_{2,k}\rho_{k,k-1}\Gamma_2\Gamma_k \\
			\vdots & \vdots & \ddots & \vdots \\
			\rho_{k-1,k}\rho_{k,1}\Gamma_{k-1}\Gamma_k & \rho_{k-1,k}\rho_{k,2}\Gamma_{k-1}\Gamma_k & \hdots &\rho_{k-1,k}\rho_{k,k-1}\Gamma_{k-1}\Gamma_k
		\end{bmatrix} \Big)$
		
		$= {\displaystyle{\prod_{i=1}^{k-1}} \phi_i} . \det\Big( \begin{bmatrix}
			I_m & -\rho_{1,2}\Gamma_1 & \hdots & -\rho_{1,k-1}\Gamma_1 \\
			-\rho_{2,1}\Gamma_2 & I_m & \hdots & -\rho_{2,k-1}\Gamma_2 \\
			\vdots & \vdots & \ddots & \vdots \\
			-\rho_{k-1,1}\Gamma_{k-1} & -\rho_{k-1,2}\Gamma_{k-1} & \hdots & I_m
		\end{bmatrix}  - \newline \begin{bmatrix}
			\rho_{1,k}\Gamma_1 \\ \rho_{2,k}\Gamma_2 \\ \vdots \\\rho_{k-1,k} \Gamma_{k-1} 
		\end{bmatrix}
\begin{matrix}
    {\Gamma_k}
\end{matrix}
 \begin{bmatrix}
			\rho_{k,1} & \rho_{k,2} & \hdots & \rho_{k,k-1}
		\end{bmatrix} \Big)$
		
		$= {\displaystyle{\prod_{i=1}^{k-1}} \phi_i} . \det\Big(
		\begin{bmatrix}
			I_m & -\rho_{1,2}{\Gamma_1}  & \cdots & -\rho_{1,k}{\Gamma_1}  \\
			-\rho_{2,1}{\Gamma_2}  & I_m & \cdots & -\rho_{2,k}{\Gamma_2} \\
			\vdots & \vdots & \ddots & \vdots  \\
			-\rho_{k,1}{\Gamma_k} & -\rho_{k,2}{\Gamma_k} & \cdots & I_m
		\end{bmatrix}\Big),~\text{by Lemma } \ref{lem2}
		$.
  
Now, from Equation(\ref{pfind}),
		     $$\det(\lambda I_n-A(G))=\prod_{i=1}^{k} \phi_i . \det\Big(
		\begin{bmatrix}
			I_m & -\rho_{1,2}{\Gamma_1}  & \cdots & -\rho_{1,k}{\Gamma_1}  \\
			-\rho_{2,1}{\Gamma_2}  & I_m & \cdots & -\rho_{2,k}{\Gamma_2} \\
			\vdots & \vdots & \ddots & \vdots  \\
			-\rho_{k,1}{\Gamma_k} & -\rho_{k,2}{\Gamma_k} & \cdots & I_m
		\end{bmatrix}\Big) $$
which proves the theorem.
\end{pf}  
\begin{rem}

For $i \in [k],$ let $\{\theta_i^1, \theta_i^2, \cdots, \theta_i^{k_i}\}$ and $\{\theta_i^1, \theta_i^2, \cdots, \theta_i^{m_i}\}$  ($m_i \le k_i$) be respectively the distinct eigenvalues and the distinct $E_i$-main eigenvalues of the graph $G_i$.  Let the spectral decomposition of $A(G_i)=\Sigma_{j=1}^{k_i} \theta_i^j \cdot \pi_{\theta_i^j}$, where $\pi_{\theta_i^j}$ is the orthogonal projection onto 
the eigenspace $\xi_{A(G_i)}(\theta_i^j)$ corresponding to the eigenvalue $\theta_i^j$. Then the spectral decomposition of $(\lambda I - A(G_i))^{-1}=\Sigma_{j=1}^{k_i} (\frac{1}{\lambda-\theta_i^j}\cdot\pi_{\theta_i^j})$. Therefore, $\Gamma_i = E_i^t(\lambda I - A(G_i))^{-1}E_i=\Sigma_{j=1}^{k_i} (\frac{1}{\lambda-\theta_i^j}\cdot E_i^t \pi_{\theta_i^j}E_i).$ 

We will prove that 
$E_i^t \pi_{\theta_i^j} E_i \ne 0$ if, and only if, $\pi_{\theta_i^j} E_i \ne 0$. The proof of the only if part is clear, and we will prove the if part.  Write $\mathbb{R}^{n_i}=W \oplus W^{\perp}$ where $W=\xi_{A(G_i)}(\theta_i^j)$. Let $u=u_W + u_{W^{\perp}}$ be the unique representation of a vector $u \in \mathbb{R}^{n_i}$ in this direct sum. Suppose $\pi_{\theta_i^j} E_i \ne 0$. Then there exists $s$ such that $\pi_{\theta_i^j} (u_i^s) = (u_i^s)_W \ne 0$ where $u_i^s$ is the $s$-th column of $E_i$. 
Thus
\begin{align*}
(u_i^{s})^t \pi_{\theta_i^j} u_i^s=<\pi_{\theta_i^j} u_i^s,u_i^s>&=<\pi_{\theta_i^j} u_i^s, (u_i^s)_W + (u_i^s)_{W^{\perp}}>\\
&=<(u_i^s)_W,(u_i^s)_W + (u_i^s)_{W^{\perp}}>\\ &=< (u_i^s)_W, (u_i^s)_W> \\&\neq 0.\end{align*}
That is, $(u_i^{s})^t \pi_{\theta_i^j} u_i^s=(E_i^t \pi_{\theta_i^j} E_i)_{ss}$ is non-zero and our claim follows. 
Thus
$E_i^t \pi_{\theta_i^j} E_i \ne 0$ if, and only if, $\pi_{\theta_i^j} E_i \ne 0$,
if, and only if, $\theta_i^j$ is an $E_i$-main eigenvalue of $G_i$. Therefore, $\Gamma_i=\Sigma_{j=1}^{m_i} (\frac{1}{\lambda-\theta_i^j}\cdot E_i^t\pi_{\theta_i^j}E_i)$. 

 \begin{equation}\label{defnfg}
\text{This implies that, for } i\in [k], \  \Gamma_i=\frac{1}{g_i} \cdot f_i  \text{ where } g_i=\prod_{j=1}^{m_i}(\lambda - \theta_i^j),\ f_i \in M_{m}(\mathbb C[\lambda]).
\end{equation}

We have $\det(\frac{1}{g_i}\cdot f_i) = \frac{1}{g_i^m} \det(f_i)$ and hence,
\begin{equation}\label{4.7}
 \det(\lambda I_n-A(G))      =\prod_{i=1}^{k} \frac{\phi_i}{g_i^m}.\begin{vmatrix}
g_1 I_m & -\rho_{1,2}f_1  & \cdots & -\rho_{1,k}f_1\\
-\rho_{2,1}f_2  & g_2 I_m & \cdots & -\rho_{2,k}f_2 \\
\vdots & \vdots & \ddots & \vdots  \\
-\rho_{k,1}f_k & -\rho_{k,2}f_k& \cdots &g_k I_m
\end{vmatrix},\end{equation}
where
\begin{equation}\label{4.8}
    \Phi(\lambda) := \begin{vmatrix}
g_1 I_m & -\rho_{1,2}f_1  & \cdots & -\rho_{1,k}f_1\\
-\rho_{2,1}f_2  & g_2 I_m & \cdots & -\rho_{2,k}f_2 \\
\vdots & \vdots & \ddots & \vdots  \\
-\rho_{k,1}f_k & -\rho_{k,2}f_k& \cdots &g_k I_m
\end{vmatrix} \in \mathbb{C}[\lambda].\end{equation} 
    
Hence, we have the following from Equation (\ref{4.7}). 
\end{rem}
\begin{thm}\label{mainnn}
    Let the notations be as above. We observe the following about the spectrum of the matrix $A(G)$. Let $\lambda$ be an eigenvalue of the graph $G_i$ with multiplicity $\mult_i(\lambda)$.
    \begin{itemize}
\item If $\lambda$ is $E_i$-non-main eigenvalue of $A(G_i)$, then $\lambda$ is an eigenvalue of $A(G)$ with multiplicity at least $\mult_i(\lambda)$.
\item If $\lambda$ is an $E_i$-main eigenvalue of $A(G_i)$, then $\lambda$ is an eigenvalue of $A(G)$ with multiplicity at least $\mult_i(\lambda)-m$.
\item Remaining eigenvalues are the roots of the polynomial $\Phi(\lambda) \in \mathbb{C}[\lambda] $ (defined above).
\end{itemize}
\end{thm}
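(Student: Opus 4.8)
The plan is to read off the multiplicity statements directly from the factorization in Equation (\ref{maine}), combined with the spectral-decomposition analysis carried out in the Remark preceding the theorem. The key identity is
\[
\det(\lambda I_n - A(G)) = \Bigg( \prod_{i=1}^k \phi_i \Bigg) \det(\widetilde{A}),
\]
where $\phi_i = \det(\lambda I_{n_i} - A(G_i))$ and $\widetilde{A}$ is the $km \times km$ block matrix whose entries are built from the main functions $\Gamma_i = \Gamma_{A(G_i)}(E_i)$. The whole argument is a bookkeeping exercise in tracking the order of vanishing (and the poles) of each factor at a prospective eigenvalue $\lambda = \theta$.

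First I would fix $i$ and an eigenvalue $\theta = \theta_i^j$ of $A(G_i)$ with multiplicity $\mult_i(\theta)$, so that $(\lambda - \theta)^{\mult_i(\theta)}$ divides $\phi_i$ exactly. The subtlety is that $\widetilde{A}$ has entries that are rational in $\lambda$ with poles precisely at the eigenvalues of the various $A(G_\ell)$, so $\det(\widetilde A)$ is not a polynomial and one cannot naively count the zeros of the product. This is exactly why the Remark rewrites everything polynomially: using the spectral decomposition $\Gamma_i = \sum_{j=1}^{m_i} \frac{1}{\lambda - \theta_i^j} E_i^t \pi_{\theta_i^j} E_i$, only the $E_i$-\emph{main} eigenvalues contribute poles to $\Gamma_i$, and one has $\Gamma_i = f_i / g_i$ with $g_i = \prod_{j=1}^{m_i}(\lambda - \theta_i^j)$ and $f_i \in M_m(\mathbb C[\lambda])$. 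Substituting this into $\det(\widetilde A)$ and clearing denominators yields the clean polynomial identity (\ref{4.7}):
\[
\det(\lambda I_n - A(G)) = \prod_{i=1}^k \frac{\phi_i}{g_i^m} \cdot \Phi(\lambda),
\]
with $\Phi(\lambda) \in \mathbb C[\lambda]$ the polynomial determinant in (\ref{4.8}). This is the form I would base the multiplicity count on.

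Now I would split into the two cases. If $\theta$ is $E_i$-\emph{non-main}, then $\theta$ is not among the $\theta_i^1, \dots, \theta_i^{m_i}$, so $(\lambda - \theta)$ does not divide $g_i$; hence the factor $\phi_i / g_i^m$ still contributes the full power $(\lambda-\theta)^{\mult_i(\theta)}$ (no cancellation from $g_i^m$), and since the remaining factors $\phi_\ell / g_\ell^m$ for $\ell \ne i$ and the polynomial $\Phi$ are regular at $\theta$, the multiplicity of $\theta$ as a root of the left-hand side is at least $\mult_i(\theta)$. If $\theta$ is $E_i$-\emph{main}, then $(\lambda - \theta)$ divides $g_i$ exactly once, so $g_i^m$ carries a factor $(\lambda - \theta)^m$ in the denominator; the factor $\phi_i/g_i^m$ therefore contributes at least $(\lambda-\theta)^{\mult_i(\theta) - m}$, and again the other terms do not reduce this, giving multiplicity at least $\mult_i(\theta) - m$ in $A(G)$. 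Finally, after accounting for all such eigenvalues coming from the factors $\phi_i/g_i^m$, the residual eigenvalues of $A(G)$ must be the roots of $\Phi(\lambda)$, establishing the third bullet.

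The main obstacle is the rigorous justification that no unexpected cancellation or extra vanishing occurs: specifically, I must verify that $\Phi(\theta) \ne 0$ cannot conspire to \emph{lower} the counted multiplicity, and that the poles of $\det(\widetilde A)$ are accounted for exactly by $\prod_i g_i^m$ in the denominator rather than by a smaller power. The first point is handled by the phrase ``at least'' in each bullet — I only claim a lower bound, so I need not control the exact behavior of $\Phi$ at $\theta$, which neatly sidesteps the hardest part. The second point is secured by the Remark's computation showing $\Gamma_i = f_i/g_i$ with $g_i$ built solely from $E_i$-main eigenvalues, so that the denominator-clearing step producing (\ref{4.7}) is exact. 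With those two observations in hand, the three bullets follow by comparing orders of vanishing on the two sides of (\ref{4.7}).
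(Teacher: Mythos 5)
Your proposal takes the same route as the paper (which presents the theorem as an immediate consequence of Equation \eqref{4.7}), but the one place where you supply an explicit justification is precisely where the argument as written fails. You claim that, at a prospective eigenvalue $\theta$ of $A(G_i)$, ``the remaining factors $\phi_\ell/g_\ell^m$ for $\ell\ne i$ and the polynomial $\Phi$ are regular at $\theta$'' and that ``the other terms do not reduce this.'' Neither is guaranteed: $\theta$ may simultaneously be an $E_\ell$-main eigenvalue of another factor graph $G_\ell$ with $\mult_\ell(\theta)<m$, in which case $\phi_\ell/g_\ell^m$ has a pole of order $m-\mult_\ell(\theta)$ at $\theta$. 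Comparing orders of vanishing in \eqref{4.7} only yields
\[
\operatorname{ord}_\theta\det(\lambda I_n-A(G))=\sum_{\ell=1}^k \mult_\ell(\theta)\;-\;m\cdot\#\{\ell:\theta\ \text{is $E_\ell$-main}\}\;+\;\operatorname{ord}_\theta\Phi,
\]
and without a lower bound on $\operatorname{ord}_\theta\Phi$ this can a priori fall below $\mult_i(\theta)$ (resp.\ $\mult_i(\theta)-m$). Hiding behind ``at least'' does not help here, because the threat is not that $\Phi$ vanishes too little to add to the count, but that the poles of the other factors subtract from it.

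Two repairs are available. Staying inside your framework: when $g_\ell(\theta)=0$, every block of the $\ell$-th block row of the matrix in \eqref{4.8} is a scalar multiple of $f_\ell(\theta)$, and $f_\ell(\theta)$ equals a nonzero scalar times $E_\ell^t\pi_\theta E_\ell$, whose rank is $r_\ell=\operatorname{rank}(\pi_\theta E_\ell)\le\min(m,\mult_\ell(\theta))$; hence that block row has rank at most $r_\ell$, the matrix evaluated at $\theta$ has nullity at least $\sum_{\ell}(m-r_\ell)$ (sum over those $\ell$ for which $\theta$ is $E_\ell$-main), and therefore $\operatorname{ord}_\theta\Phi\ge\sum_{\ell}(m-r_\ell)$, which is exactly enough to restore both bounds. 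Alternatively, both bullets follow from a short eigenvector argument that avoids \eqref{4.7} entirely: if $v\in\xi_{A(G_i)}(\theta)$ and $E_i^tv=0$, then by Lemma \ref{Adjlem} the vector of $\bc^n$ equal to $v$ on the $i$-th block and $0$ elsewhere is an eigenvector of $A(G)$ for $\theta$, and $\dim\bigl(\xi_{A(G_i)}(\theta)\cap\ker E_i^t\bigr)$ equals $\mult_i(\theta)$ in the $E_i$-non-main case and is at least $\mult_i(\theta)-m$ in general.
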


Next, we illustrate our result with a few examples.

\begin{example}
     Consider the following $(P_2)_2$-join $G$ of $G_1=K_2, G_2=K_5$ with the indexing maps $I_1,I_2$. Below, each vertex $v$ is labeled with the values of these indexing maps.
\begin{figure}[h]
\centering
\begin{tikzpicture}[
every edge/.style = {draw=black,very thick},
 vrtx/.style args = {#1/#2}{%
      circle, draw, scale=.5, fill=black,
      minimum size, label=#1:#2}
                    ]
\node(a) [vrtx=left/1] at (-2,-1) {};
\node(b) [vrtx=left/1] at (-2,3) {};
\node(2a) [vrtx=above/1] at (4,3) {};
\node(2b) [vrtx=below right/2] at (6,1) {};
\node(2c) [vrtx=below/2] at (5,-1) {};
\node(2d) [vrtx=below/1] at (3,-1) {};
\node(2e) [vrtx=below left/1] at (2,1) {};

\path (a)edge(b)
(2a)edge(2b)edge(2c)edge(2d)edge(2e)
(2b)edge(2c)edge(2d)edge(2e)
(2c)edge(2d)edge(2e)
(2d)edge(2e)
(a)edge[blue](2a)edge[blue](2d)edge[blue](2e)
(b)edge[blue](2a)edge[blue](2d)edge[blue](2e);
 \end{tikzpicture}\caption{$(P_2)_2$-join of $\{K_2, K_5\}$}
                    \end{figure}           

The indexing matrices, with a suitable ordering, are as follows.
$$E_1=\begin{bmatrix}
    1 & 0 \\
    1 & 0 
\end{bmatrix}, E_2=\begin{bmatrix}
      1 & 0 \\  
      1 & 0 \\
      1 & 0 \\
      0 & 1 \\
      0 & 1
\end{bmatrix}.$$

We find that $$\phi_1(\lambda)=\lambda^2 -1, \phi_2 (\lambda)=(\lambda-4) (\lambda +1)^4.$$

and

$$\Gamma_1=\frac{1}{\lambda^2-1}\begin{bmatrix}
    2\lambda+2 & 0\\
    0 & 0
\end{bmatrix}, \Gamma_2=\frac{1}{(\lambda-4)(\lambda+1)}\begin{bmatrix}
    3(x-1) & 6\\
    6 & 2(x-2)
\end{bmatrix}.$$

Now, by Theorem \ref{mainn}, 
\begin{align*}
    \det(\lambda I_7-A(G))&=\phi_1(\lambda) \cdot \phi_2(\lambda)  \cdot \det\begin{bmatrix}
    I_2 & -\Gamma_1 \\
    -\Gamma_2 & I_2
\end{bmatrix}, \\& = (\lambda^2 -1)\cdot (\lambda-4) (\lambda +1)^4 \cdot \frac{(\lambda+2)(\lambda-5)}{(\lambda+1)(\lambda-4)},\\
&=(\lambda+2)(\lambda-5)(\lambda-1)(\lambda+1)^4.
\end{align*}
\end{example}

\begin{rem} Note that
    \begin{itemize}[leftmargin=*]
        \item $\sigma(A(G_1))=\{1,-1\}$ with $\xi_{A(G_1)}(1)=\text{span}\{\begin{bmatrix}
       1\\1
   \end{bmatrix} \}$, $\xi_{A(G_1)}(-1)=\text{span}\{\begin{bmatrix}
       1\\-1
   \end{bmatrix} \}$.
   Thus $1\in \sigma(A(G_1))$ is $E_1$-main and $-1\in \sigma(A(G_1))$ is $E_1$-non-main.

   \item $\sigma(A(G_2))=\{4,-1,-1,-1,-1\}$ with  $\xi_{A(G_2)}(4)=\text{span}\{\begin{bmatrix}
       1\\1\\1\\1\\1
   \end{bmatrix} \}$,\\  $\xi_{A(G_2)}(-1)=\text{span}\{\begin{bmatrix}
       1\\-1\\0\\0\\0
   \end{bmatrix}, \begin{bmatrix}
       0\\1\\-1\\0\\0
   \end{bmatrix}, \begin{bmatrix}
      0\\0\\ 1\\-1\\0
   \end{bmatrix}, \begin{bmatrix}
      0\\0\\0\\ 1\\-1
   \end{bmatrix}    \}$. Thus the eigenvalues $4$ and $-1$ are both $E_2$-main.

   \item Note that -1 gets carried forward at least three times, by Theorem \ref{mainnn}. One time as an $E_1$-non-main eigenvalue of $A(G_1)$, at least two times as an $E_2$ main eigenvalue of $A(G_2)$. 
    \end{itemize}
\end{rem}

\begin{example}\label{mainexample}
    Consider the following $(P_3)_3$-join $G$ of $G_1=K_2, G_2=P_3,  G_3=K_{1,3}$ with the indexing maps $I_1,I_2,I_3,I_4$. Below, each vertex $v$ is labeled with the values of these indexing maps.
\begin{figure}[h]
\centering
\begin{tikzpicture}[
every edge/.style = {draw=black,very thick},
 vrtx/.style args = {#1/#2}{%
      circle, draw, scale=.5, fill=black,
      minimum size, label=#1:#2}
                    ]
\node(a) [vrtx=left/1] at (0,0) {};
\node(b) [vrtx=left/2] at (0,2) {};
\node(2a) [vrtx=below /1] at (4,-.5) {};
\node(2b) [vrtx=above left/2] at (4,1) {};
\node(2c) [vrtx=above/3] at (4,2.5) {};
\node(3a) [vrtx=right/1] at (9,1) {};
\node(3b) [vrtx=below/1] at (7.5,0) {};
\node(3c) [vrtx=right/3] at (9,2.5) {};
\node(3d) [vrtx=right/3] at (9,-.5) {};

\path (a)edge(b)edge[blue](2a)
(b)edge[blue](2b)
(2a)edge(2b)edge[blue](3a)edge[blue](3b)
(2b)edge(2c)
(2c)edge[blue](3d)edge[blue](3c)
(3a)edge(3b)edge(3c)edge(3d)
;

 \end{tikzpicture}\caption{$(P_3)_3$-join of $\{K_2, P_3, K_{1,3}\}$}
                    \end{figure}           
\end{example}
The indexing matrices, with a suitable ordering, are as follows.
$$E_1=\begin{bmatrix}
    1 & 0 & 0\\
    0 & 1 & 0
\end{bmatrix}, E_2=\begin{bmatrix}
       1 & 0 & 0\\
       0 & 1 & 0\\
    0 & 0 & 1
\end{bmatrix}, E_3=\begin{bmatrix}
      1 & 0 & 0\\  1 & 0 & 0\\
      0 & 0 & 1\\
      0 & 0 & 1
\end{bmatrix}.$$

We have  
$$ A(G_1)=\begin{bmatrix}
    0 & 1 \\
    1 & 0
\end{bmatrix}, A(G_2)= \begin{bmatrix}
    0 & 1 & 0\\
    1 & 0 & 1\\
    0 & 1 & 0
\end{bmatrix}, A(G_3)= \begin{bmatrix}
    0 & 1 & 1 & 1\\
    1 & 0 & 0 & 0\\
    1 & 0 & 0 & 0\\
    1 & 0 & 0 & 0
\end{bmatrix},$$ 

with respective characteristic polynomials

$$\phi_1(\lambda)=\lambda^2 -1, \phi_2(\lambda)=\lambda^3 -2\lambda, \phi_3 (\lambda)=\lambda^2 (\lambda^2 -3),$$
and 

$$A(G)=\begin{bmatrix}
    0 & 1 & 1 & 0 & 0 & 0 & 0 & 0 & 0 \\
    1 & 0 & 0 & 1 & 0 & 0 & 0 & 0 & 0 \\
    1 & 0 & 0 & 1 & 0 & 1 & 1 & 0 & 0 \\
    0 & 1 & 1 & 0 & 1 & 0 & 0 & 0 & 0 \\
    0 & 0 & 0 & 1 & 0 & 0 & 0 & 1 & 1 \\
    0 & 0 & 1 & 0 & 0 & 0 & 1 & 1 & 1 \\
    0 & 0 & 1 & 0 & 0 & 1 & 0 & 0 & 0 \\
    0 & 0 & 0 & 0 & 1 & 1 & 0 & 0 & 0 \\
    0 & 0 & 0 & 0 & 1 & 1 & 0 & 0 & 0
\end{bmatrix}.$$ 
Also, $$\Gamma_1=\frac{1}{\lambda^2-1}\begin{bmatrix}
    \lambda & 1 & 0\\
    1 & \lambda & 0\\
    0 & 0 & 0
\end{bmatrix},\ \Gamma_2=\frac{1}{\lambda^3-2\lambda}\begin{bmatrix}
    \lambda^2-1 & \lambda & 1\\
    \lambda & \lambda^2 & \lambda\\
    1 & \lambda & \lambda^2-1
\end{bmatrix},$$ and $$
\Gamma_3=\frac{1}{\lambda (\lambda^2 -3)}\begin{bmatrix}
   2\lambda^2+2\lambda-2 & 0 & 2\lambda+2\\
   0 & 0 & 0\\
   2\lambda+2 & 0 & 2\lambda^2-2
\end{bmatrix}.$$
Now, by Theorem \ref{mainn}, 
\begin{align*}
    \det(\lambda I_9-A(G))&=\phi_1(\lambda) \cdot \phi_2(\lambda) \cdot \phi_3(\lambda) \cdot \det\begin{bmatrix}
    I_3 & -\Gamma_1 & 0\\
    -\Gamma_2 & I_3 & -\Gamma_2\\
    0 & -\Gamma_3 & I_3
\end{bmatrix}, \\& = (\lambda^2 -1)\cdot (\lambda^3 -2\lambda) \cdot (\lambda^2 (\lambda^2 -3)) \cdot \frac{(\lambda^8 - 12 \lambda^6 - 2 \lambda^5 + 39 \lambda^4 + 6 \lambda^3 - 34 \lambda^2 - 10 \lambda + 2)}{(\lambda^2 -1)\cdot (\lambda^3 -2\lambda) \cdot (\lambda (\lambda^2 -3))},
\\  &= \lambda (\lambda^8 - 12 \lambda^6 - 2 \lambda^5 + 39 \lambda^4 + 6 \lambda^3 - 34 \lambda^2 - 10 \lambda + 2).
\end{align*}

\begin{rem}

Note that 

\begin{itemize}[leftmargin=*]
   \item $\sigma(A(G))=\{-2.2326..., -2.2095..., -0.9057..., -0.50631..., 0, 0.1381..., 1.3395..., 1.5942..., 2.8763... \}$, .
   \item $\sigma(A(G_1))=\{1,-1\}$ with $\xi_{A(G_1)}(1)=\text{span}\{\begin{bmatrix}
       1\\1
   \end{bmatrix} \}$, $\xi_{A(G_1)}(-1)=\text{span}\{\begin{bmatrix}
       1\\-1
   \end{bmatrix} \}$.
   \item $\sigma(A(G_2))=\{0,\sqrt 2, -\sqrt 2\}$ with $\xi_{A(G_2)}(0)=\text{span}\{ \begin{bmatrix}
       1\\0\\-1
   \end{bmatrix}\}$,  $\xi_{A(G_2)}(\sqrt 2)=\text{span}\{ \begin{bmatrix}
       1\\\sqrt 2\\1
   \end{bmatrix}\}$, $\xi_{A(G_2)}(-\sqrt 2)=\text{span}\{ \begin{bmatrix}
       -1\\\sqrt 2\\-1
   \end{bmatrix}\}$.
\item $\sigma(A(G_3))=\{0,0,+\sqrt 3,-\sqrt 3 \}$ with\newline
$\xi_{A(G_3)}(0)=\text{span}\{\begin{bmatrix}
    0 \\ -1 \\ 1 \\ 0
\end{bmatrix}, \begin{bmatrix}
    0 \\ -1 \\ 0 \\ 1
\end{bmatrix} \}, \xi_{A(G_3)}(\sqrt 3)=\text{span}\{\begin{bmatrix}
    \sqrt{3} \\ 1 \\ 1 \\ 1
\end{bmatrix}   \},  \xi_{A(G_3)}(-\sqrt 3)=\text{span}\{\begin{bmatrix}
    -\sqrt{3} \\ 1 \\ 1 \\ 1
\end{bmatrix}   \}. $

 \item  By Remark \ref{rem3}(3), All eigenvalues of $A(G_1)$, $A(G_2)$ are $E_1$-main and $E_2$-main, respectively.
We also see that all eigenvalues of $A(G_3)$ are $E_3$-main.
\item We observe that 0 is an $E_2$-main and $E_3$-main eigenvalue of $A(G_2)$ and $A(G_3)$, respectively. By our result, $0$ can get carried forward as an eigenvalue of $A(G_2)$ at least $-2 = 1-3$ times and as an eigenvalue of $A(G_3)$ at least $-1 = 2-3$ times. Here, we emphasize ``at least" because $0$ is an eigenvalue of $A(G)$.
\end{itemize} 
\end{rem}
\subsection{Reducing the value of $m$}

In the above theorem, we observe that the lesser the value of $m$ is, the more the number of eigenvalues from factor graphs that get carried forward. Below, we discuss a few situations when the value of $m$ can be reduced.
Consider an $H_m$-join $G$ of the family $\mathcal F = \{G_1,G_2,\dots,G_k\}$ with respect to the indexing maps $\mathcal I = \{I_1,\dots,I_k\}$. 

\textbf{Situation 1:} Suppose $t \in [m]$ is not in $\cup_{i=1}^k I_i(V(G_i))$ then by suitably modifying the maps in $\mathcal{I}$ without affecting the graph, we can reduce the value of $m$ by one for each such $t$. This is straightforward, and we discuss another way of reducing the value of $m$ below.

\textbf{Situation 2:} For $i \in [m],$ Let $F_i = \{v \in V(G_i)$ : $I_i(v)\neq I_j(u)$ for any $u \in V(G_j), j \in [m]\backslash \{i\}\}.$   Let $F = \cup_{i=1}^k F_i$ and $l(F) =\{c \in [m] : c=I_i(v)  \ \text{for some}\ v \in F_i\}$. 

 Now, for a fixed $c\in l(F),$ for each $i\in [k],$ let $E'_i$  be the matrix obtained from the matrix $E
_i$  (c.f. Definition \ref{def E}) by deleting its $c$-th column and hence $E'_i$ is an $n_i\times m-1$ matrix.  Given these notions, we have

\begin{lem}[$Reduction \ Lemma$]\label{reduction}
The adjacency matrix $A(G)$ of the graph $G$ satisfies
 \begin{equation}\label{reduced}
 A(G)=
\begin{bmatrix}
    A(G_1) &  \rho_{1,2}E_1E_2^t & \cdots  & \rho_{1,k} E_1E_k^t \\
			\rho_{2,1} E_2 E_1^t & A(G_2) & \cdots &  \rho_{2,k} E_2E_k^t  \\
			\vdots  & \vdots  & \ddots   & \vdots\\
			\rho_{k,1} E_kE_1^t &\rho_{k,2} E_kE_2^t & \cdots \cdots  &A(G_k)
\end{bmatrix}=\begin{bmatrix}
    A(G_1) & \rho_{1,2} E_1^{'}E_2^{'t} & \cdots  &  \rho_{1,k}E_1^{'} E_k^{'t} \\
			\rho_{2,1} E_2^{'} E_1^{'t} & A(G_2) & \cdots &  \rho_{2,k}E_2^{'}E_k^{'t}  \\
			\vdots  & \vdots  & \ddots   & \vdots\\
			\rho_{k,1} E_k^{'} E_1^{'t} & \rho_{k,2} E_k^{'} E_2^{'t} & \cdots \cdots  &A(G_k)
\end{bmatrix}.
\end{equation}

In particular, the graph $G$ can be expressed as an $H_{m-1}$-join.
\end{lem}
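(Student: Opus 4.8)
The plan is to verify the matrix identity \eqref{reduced} blockwise and then extract from it the realization as an $H_{m-1}$-join. By Lemma \ref{Adjlem} both sides of \eqref{reduced} have the same diagonal blocks $A(G_i)$, so it suffices to prove that the off-diagonal blocks agree, i.e. $\rho_{i,j}E_iE_j^t=\rho_{i,j}E_i'(E_j')^t$ for all $i\ne j$. The claim is vacuous when $\rho_{i,j}=0$, so I would in fact prove $E_iE_j^t=E_i'(E_j')^t$ for every pair $i\ne j$. Writing each product as a sum over the $m$ columns, $(E_iE_j^t)_{rs}=\sum_{q=1}^m (E_i)_{rq}(E_j)_{sq}$, and deleting the $c$-th column only discards the $q=c$ summand, so the two blocks differ precisely by the correction $(E_i)_{rc}(E_j)_{sc}$ in entry $(r,s)$.

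The heart of the argument is that this correction vanishes identically off the diagonal. If $(E_i)_{rc}(E_j)_{sc}=1$ with $i\ne j$, then $I_i(v_i^r)=c=I_j(v_j^s)$, so the value $c$ would be attained in two distinct factors. But $c\in l(F)$ means $c=I_{i_0}(w)$ for some $w\in F_{i_0}$, and the defining property of $F_{i_0}$ is exactly that $I_{i_0}(w)\ne I_l(u)$ for all $u\in V(G_l)$ with $l\ne i_0$; hence no factor other than $G_{i_0}$ contains a vertex of index $c$. Thus at most one of $i,j$ can equal $i_0$, forcing one of $(E_i)_{rc},(E_j)_{sc}$ to be $0$. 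This yields $E_iE_j^t=E_i'(E_j')^t$ for all $i\ne j$, and hence the identity \eqref{reduced}. I expect this ``privacy of $c$'' cancellation to be the decisive computation.

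To deduce that $G$ is genuinely an $H_{m-1}$-join I would construct indexing maps $I'_i\colon V(G_i)\to[m-1]$ after identifying $[m]\setminus\{c\}$ with $[m-1]$. For every $i\ne i_0$ the $c$-th column of $E_i$ is already zero, so $E_i'$ is the indexing matrix of the restricted map $I'_i:=I_i$ and nothing further is required. The one delicate case is $G_{i_0}$: its index-$c$ vertices produce zero rows in $E_{i_0}'$, so $E_{i_0}'$ is not yet a bona fide indexing matrix. The entrywise cancellation shows that these vertices carry no cross-edges in $G$, so I would reassign each of them to an index $c'$ not attained by any vertex of a factor $G_l$ with $\rho_{i_0,l}=1$; such a reassignment alters no off-diagonal block and turns $I'_{i_0}$ into a total map into $[m-1]$. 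I expect this final bookkeeping step — eliminating the zero rows of $E_{i_0}'$ to obtain a legitimate total indexing map without introducing new adjacencies — to be the main obstacle, since it is precisely here that one must exploit that the deleted index $c$ was private to the single factor $G_{i_0}$.
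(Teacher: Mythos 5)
Your verification of the matrix identity \eqref{reduced} is correct and is essentially the paper's own proof: the paper likewise works entrywise, observing that $(E_iE_j^t)_{rs}=1$ forces $v_i^r\notin F_i$ and $v_j^s\notin F_j$, so that the deleted $q=c$ summand $(E_i)_{rc}(E_j)_{sc}$ vanishes for $i\neq j$. Your ``privacy of $c$'' computation is exactly this cancellation, just spelled out more explicitly than the paper's three-line case analysis.

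The gap is in your final paragraph, concerning the clause that $G$ is an $H_{m-1}$-join. You correctly spot what the paper passes over in silence --- that $E_{i_0}'$ acquires zero rows and is therefore not the indexing matrix of any total map $V(G_{i_0})\to[m-1]$ --- but your repair does not always go through: a label $c'$ unattained by every $H$-neighbour of $G_{i_0}$ need not exist in $[m]\setminus\{c\}$. Take $H=K_2$, $G_1=G_2=K_1$, $m=2$, $I_1\equiv 1$, $I_2\equiv 2$. Then $G$ is edgeless, $c=1$ lies in $l(F)$, and the matrix identity holds ($E_1'(E_2')^t=[0]=E_1E_2^t$), but the only surviving label $2$ is already used by the neighbour $G_2$, so any reassignment creates an edge; indeed the edgeless graph on two vertices is not an $H_1$-join over $K_2$ at all, since an $H_1$-join forces the edge. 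So the ``in particular'' clause, read literally with $H$ fixed and total indexing maps, can fail, and no bookkeeping rescues it. This costs nothing downstream --- the paper only ever uses the displayed block form, and Theorem~\ref{mainn} applies to arbitrary rectangular $E_i$, so the spectral corollaries survive --- but in your write-up you should either retreat to the matrix-level statement or add the hypothesis that some label of $[m]\setminus\{c\}$ is unused by all $H$-neighbours of $G_{i_0}$ (or permit $H$ itself to be modified).
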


\begin{pf}
\begin{align*}
    (E_iE_j^t)_{rs} &= \begin{cases}
    1& \text{if}\ I_i(v_i^r)=I_j(v_j^s)\\
    0 & \text{otherwise.}
\end{cases}\\
&= \begin{cases}
    1& \text{if}\ v_i^r \notin F_i, v_j^s \notin F_j \ \text{and} \ (E_iE_j^t)_{rs} =1 \\
    0 & \text{otherwise.}
\end{cases}\\
&= (E_i^{'}E_j^{'t})_{rs}.
\end{align*}
\end{pf}

By the above lemma, for each $c \in l(F) $, the value of $m$ can be decreased by one.

By repeatedly applying this process on the labels in $l(F)$, we see that $G$ can be expressed as an $H_{m_d}$-join, where $m_d=\lvert l(F)\rvert$.

\textbf{Situation 3:}
Along the same lines, 
Let $F'_i = \{v \in V(G_i)$ : $I_i(v)\neq I_j(u)$ for any $u \in V(G_j) \ \text{with}\ v_j \in N_H(v_i)\}.$   Let $F^{'}=\{c \in [m] : \text{whenever}\ c=I_i(v)\ \text{for some}\ i \in [k],  v \in F'_i \}$. Letting $m^{'}_d=\lvert F^{'}\rvert$, we see that $G$ can be equivalently expressed as an $H_{m^{'}_d}$-join as well. Note that $m \geq m_d \geq m_d^{'}$. And we get
\begin{cor}
    Let the notations be as in Theorem \ref{mainn}. then 
     If $\lambda$ is an $E_i$-main eigenvalue of $A(G_i)$, then $\lambda$ is an eigenvalue of $A(G)$ with multiplicity at least $\mult_i(\lambda)-m_d^{'}$.
\end{cor}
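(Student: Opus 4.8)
The plan is to obtain the sharper constant $m_d'$ by applying \thmref{mainnn} not to the original $H_m$-join data but to the more economical description of $G$ provided by Situation 3. By Situation 3, $G$ is also an $H_{m_d'}$-join of the \emph{same} family $\mathcal F$, with indexing matrices $\widetilde E_i$ of size $n_i \times m_d'$ obtained from $E_i$ by deleting the columns indexed by the removable labels. Since the graphs $G_i$ are left untouched, the matrices $A(G_i)$, their eigenvalues, and the multiplicities $\mult_i(\lambda)$ are unchanged; only the off-diagonal blocks of \eqref{AoomfG} are rewritten in terms of the $\widetilde E_i$.

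First I would record why this reduction leaves $A(G)$ invariant, in the style of the Reduction Lemma \lemref{reduction}. If $c$ is a removable label then, by the defining property of $F_i'$, whenever a vertex of $G_i$ carries label $c$ no vertex of a graph $G_j$ with $v_j\in N_H(v_i)$ carries $c$; hence the $c$-th column of the $E_i$ can contribute to an off-diagonal block $\rho_{i,j}E_iE_j^t$ only for a pair with $\rho_{i,j}=0$, where the block vanishes. Evaluating $(\widetilde E_i\widetilde E_j^t)_{rs}$ exactly as in \lemref{reduction} then gives $\rho_{i,j}\widetilde E_i\widetilde E_j^t=\rho_{i,j}E_iE_j^t$ for all $i\ne j$, so \eqref{AoomfG} is literally unchanged and $G$ is the asserted $H_{m_d'}$-join.

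With $G$ so described, I would apply \thmref{mainnn} verbatim with $(m,E_i)$ replaced by $(m_d',\widetilde E_i)$ and split into cases. If $\lambda$ is $\widetilde E_i$-main, its multiplicity in $A(G)$ is at least $\mult_i(\lambda)-m_d'$; if $\lambda$ is $\widetilde E_i$-non-main, its multiplicity is at least $\mult_i(\lambda)\ge\mult_i(\lambda)-m_d'$. Either way the asserted bound holds. The subtle point --- and the only real content beyond citing Situation 3 --- is that the hypothesis ``$\lambda$ is $E_i$-main'' need not persist after the deletions: because the column space of $\widetilde E_i$ is contained in that of $E_i$, an $E_i$-main eigenvalue may turn $\widetilde E_i$-non-main, but never conversely. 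This is precisely why the reduction helps, since the non-main branch of \thmref{mainnn} carries the stronger conclusion $\mult_i(\lambda)$. The main obstacle, modest as it is, is verifying the block identity of the second paragraph; this is routine once the definition of a removable label is unwound and mirrors \lemref{reduction}.
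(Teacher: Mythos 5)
Your proposal is correct and follows exactly the route the paper intends: the paper states this corollary without proof, the implicit argument being precisely to re-express $G$ as an $H_{m_d'}$-join via Situation 3 (with the block identity verified as in the Reduction Lemma, Lemma \ref{reduction}) and then invoke Theorem \ref{mainnn} with $m$ replaced by $m_d'$. Your additional observation --- that an $E_i$-main eigenvalue may become non-main for the reduced indexing matrix, in which case the non-main branch of Theorem \ref{mainnn} gives the even stronger bound $\mult_i(\lambda)$ --- is a genuine subtlety the paper glosses over, and your case split handles it correctly.
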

\subsection{\textit{Universal Spectra of $H_m$-joins}}
This subsection studies the universal spectra of graphs obtained from the $H_m$-join operation. The universal adjacency matrix of a graph $G$ is defined to be $U(G):=\alpha A(G) + \beta I + \gamma J + \delta D(G)$ where $\alpha, \beta, \gamma, \delta \in \mathbb R$.  Universal adjacency matrix generalizes many interesting matrices associated with graph $G$ such as Laplacian $(\alpha,\beta,\gamma,\delta)=(1,0,0,-1)$, signless Laplacian $(\alpha,\beta,\gamma,\delta)=(1,0,0,1)$, $A_{\alpha}$ matrix $(\alpha,\beta,\gamma,\delta)=(1-\alpha,0,0,\alpha)$$, \alpha \in [0,1]$, and Seidal matrix $(\alpha,\beta,\gamma,\delta)=(-2,1,1,0)$ thus gaining its importance. 

Let the notations be as in Theorem \ref{mainn}. From Equation (\ref{AoomfG}), we get 
 
\begin{equation}\label{unis}
    U(G) = \begin{bmatrix}
         U(G_1)+\delta \mathcal D_1 &  \rho_{1,2} E_1E_2^t+\gamma\bold J_{n_1 \times n_2} & \cdots  & \rho_{1,k} E_1E_k^t+\gamma \bold J_{n_1 \times n_k} \\
			\rho_{2,1} E_2 E_1^t+\gamma \bold J_{n_2 \times n_1} & U(G_2)+\delta \mathcal D_2 & \cdots & \rho_{2,k} E_2E_k^t+\gamma \bold J_{n_2 \times n_k}  \\
			\vdots  & \vdots  & \ddots   & \vdots\\
			\rho_{k,1} E_kE_1^t+\gamma \bold J_{n_k \times n_1} & \rho_{k,2} E_kE_2^t+\gamma \bold J_{n_k \times n_2} & \cdots \cdots  &U(G_k)+\delta \mathcal D_k
    \end{bmatrix}
\end{equation}
where for $i\in [k],$ $(\mathcal D_i)_{st}=\begin{cases}
    \sum_{v_i v_j \in E(H)} \lvert \{v_j^p \in V(G_j): I_j(v_j^p) = I_i(v_i^s)\}\rvert & \text{if} \ s=t,\\
    0 &  \text{otherwise}.   
\end{cases}$

Since the same proof (as in Theorem \ref{mainn}) goes through if we let $\gamma=0$ in the above equation and let $U(G_i)+\delta \mathcal D_i$ take the role of $A(G_i)$ for each $i \in [k]$, we get 
\begin{equation}
			\det(\lambda I_n - U(G)) = \Bigg( \prod_{i=1}^k \phi_{U(G_i)+\delta \mathcal D_i}(\lambda) \Bigg) \det(\widetilde{U}) .
		\end{equation}  where
 $$
		\widetilde{U} :=  \begin{bmatrix}
			I_m & -\rho_{1,2}\Gamma_{(U(G_1)+\delta \mathcal D_1)}(E_1)  & \cdots & -\rho_{1,k}\Gamma_{(U(G_1)+\delta \mathcal D_1)}(E_1) \\
			-\rho_{2,1}\Gamma_{(U(G_2)+\delta \mathcal D_2)}(E_2)  & I_m & \cdots & -\rho_{2,k}\Gamma_{(U(G_2)+\delta \mathcal D_2)}(E_2)\\
			\vdots & \vdots & \ddots & \vdots  \\
			-\rho_{k,1}\Gamma_{(U(G_k)+\delta \mathcal D_k)}(E_k) & -\rho_{k,2}\Gamma_{(U(G_k)+\delta \mathcal D_k)}(E_k) & \cdots &I_m
		\end{bmatrix}.$$ 
  where for $i \in [k],$ $$ \Gamma_{(U(G_i)+\delta \mathcal D_i)}(E_i)=E_i^{t} (\lambda I_{n_i}-(U(G_i)+\delta \mathcal D_i))^{-1} E_i.$$
  and 
  $$\phi_{(U(G_i)+\delta \mathcal{D}_i)}(\lambda)=\det(\lambda I_{n_i}-(U(G_i)+\delta \mathcal{D}_i)).$$
Thus, we have the following corollary. 
 \begin{cor}
     Let the notations be as in Theorem \ref{mainn}. Let $\lambda$ be an eigenvalue of the matrix $\alpha A(G_i) + \beta I_{n_i} + \delta (D(G_i)+\mathcal D_i)$with multiplicity $\mult_i(\lambda)$. Then, we have the following. 
     \begin{itemize}
         \item If $\lambda$ is an $E_i$-non-main eigenvalue of $\alpha A(G_i) + \beta I_{n_i} + \delta (D(G_i)+\mathcal D_i)$, then $\lambda$ is an eigenvalue of $\alpha A(G) + \beta I + \delta D(G) \ \ $ with multiplicity $\mult_i(\lambda)$.
         \item If $\lambda$ is an $E_i$- main eigenvalue of $\alpha A(G_i) + \beta I_{n_i} + \delta (D(G_i)+\mathcal D_i)$, then $\lambda$ is an eigenvalue of $\alpha A(G) + \beta I + \delta D(G)$ with multiplicity at least $\mult_i(\lambda)-m$.
         \item Remaining eigenvalues are roots of the polynomial $\Phi^{'}(\lambda) \in \mathbb{C}[\lambda].$ \end{itemize}
 where $\Phi_1(\lambda)$ is analogously defined as in Equation (\ref{4.8}).

  In particular, we conclude the following about the Laplacian matrix $L(G)$ and Signless Laplacian matrix $Q(G)$, respectively.
  
   Let $\lambda$ be an eigenvalue of the matrix $L(G_i)-\mathcal D_i$ with multiplicity $\mult_i(\lambda)$. Then, we have the following.
    \begin{itemize}
\item If $\lambda$ is an $E_i$-non-main eigenvalue of $L(G_i)-\mathcal D_i$, then $\lambda$ is an eigenvalue of $L(G)$ with multiplicity at least $\mult_i(\lambda)$.
\item If $\lambda$ is an $E_i$-main eigenvalue of $L(G_i)-\mathcal D_i$, then $\lambda$ is an eigenvalue of $L(G)$ with multiplicity at least $\mult_i(\lambda)-m$.
\item Remaining eigenvalues are the roots of the polynomial $\Phi^{'}(\lambda) \in \mathbb{C}[\lambda].$
\end{itemize} where $\Phi_2(\lambda)$ is analogously defined as in Equation (\ref{4.8}). 

Similarly, 
Let $\lambda$ be an eigenvalue of the matrix $L(G_i)+\mathcal D_i$ with multiplicity $\mult_i(\lambda)$. Then, we have the following.
    \begin{itemize}
\item If $\lambda$ is an $E_i$-non-main eigenvalue of $L(G_i)+\mathcal D_i$, then $\lambda$ is an eigenvalue of $Q(G)$ with multiplicity at least $\mult_i(\lambda)$.
\item If $\lambda$ is an $E_i$-main eigenvalue of $L(G_i)+\mathcal D_i$, then $\lambda$ is an eigenvalue of $Q(G)$ with multiplicity at least $\mult_i(\lambda)-m$.
\item Remaining eigenvalues are the roots of the polynomial $\Phi^{'}(\lambda) \in \mathbb{C}[\lambda].$
\end{itemize} where $\Phi_3(\lambda)$ is again analogously defined as in Equation (\ref{4.8}). 
\end{cor}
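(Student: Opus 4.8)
The plan is to recognize the matrix $N := \alpha A(G) + \beta I_n + \delta D(G)$ as the universal adjacency matrix $U(G)$ specialized at $\gamma = 0$, and then to run the determinant computation of \thmref{mainn} essentially verbatim. First I would verify the block decomposition \eqref{unis}: beyond \lemref{Adjlem}, its only content is that the $i$-th diagonal block of $D(G)$ equals $D(G_i) + \mathcal D_i$, which holds because the $H_m$-join adds to a vertex $v_i^s$ exactly $(\mathcal D_i)_{ss}$ new edges (one to each vertex of a neighbouring $G_j$ carrying the same index). Setting $\gamma = 0$ then annihilates every $\gamma\mathbf J$ summand in the off-diagonal blocks, leaving $N$ block-structured with diagonal blocks $M_i := \alpha A(G_i) + \beta I_{n_i} + \delta(D(G_i) + \mathcal D_i)$ and rank-structured off-diagonal blocks $\alpha\rho_{i,j} E_i E_j^t$, exactly the shape of \eqref{AoomfG}.

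The key observation is that the proof of \thmref{mainn} uses only \lemref{lem2} and \lemref{lem3}, both valid for arbitrary complex matrices; nowhere is it used that the diagonal blocks are adjacency matrices or that the scalars are $0$–$1$. Hence the identical induction on $k$, with $M_i$ in place of $A(G_i)$ and $\alpha\rho_{i,j}$ in place of $\rho_{i,j}$, yields
\[
\det(\lambda I_n - N) = \Bigl(\prod_{i=1}^k \phi_{M_i}(\lambda)\Bigr)\det(\widetilde{U}),
\]
where $\widetilde{U}$ carries $I_m$ on its diagonal and $-\alpha\rho_{i,j}\,\Gamma_{M_i}(E_i)$ off the diagonal, with $\phi_{M_i}(\lambda) = \det(\lambda I_{n_i} - M_i)$ and $\Gamma_{M_i}(E_i) = E_i^t(\lambda I_{n_i} - M_i)^{-1} E_i$. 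I would then repeat the spectral-decomposition argument preceding \eqref{4.7}: writing $M_i = \sum_j \theta_i^j \pi_{\theta_i^j}$ one gets $\Gamma_{M_i}(E_i) = \sum_j \tfrac{1}{\lambda - \theta_i^j} E_i^t \pi_{\theta_i^j} E_i$, and $E_i^t \pi_{\theta_i^j} E_i \neq 0$ precisely when $\theta_i^j$ is $E_i$-main. Thus $\Gamma_{M_i}(E_i) = f_i / g_i$ with $g_i = \prod_{j=1}^{m_i}(\lambda - \theta_i^j)$ over the distinct $E_i$-main eigenvalues of $M_i$ and $f_i \in M_m(\mathbb C[\lambda])$; clearing the denominator in each block row as in \eqref{4.7}–\eqref{4.8} gives
\[
\det(\lambda I_n - N) = \Bigl(\prod_{i=1}^k \tfrac{\phi_{M_i}(\lambda)}{g_i^{\,m}}\Bigr)\,\Phi'(\lambda),
\]
where $\Phi'(\lambda)$ is the $km \times km$ determinant obtained from $\widetilde{U}$ by scaling each block row $i$ by $g_i$, an honest element of $\mathbb C[\lambda]$.

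The multiplicity bookkeeping then mirrors \thmref{mainnn}. An $E_i$-non-main eigenvalue $\lambda_0$ of $M_i$ contributes $(\lambda-\lambda_0)^{\mult_i(\lambda_0)}$ to $\phi_{M_i}$ but does not occur in $g_i$, so it survives in $\phi_{M_i}/g_i^{\,m}$ with full multiplicity $\mult_i(\lambda_0)$, giving multiplicity at least $\mult_i(\lambda_0)$ for $N$. An $E_i$-main eigenvalue $\lambda_0$ occurs in $g_i^{\,m}$ to order $m$, so $\phi_{M_i}/g_i^{\,m}$ retains it to order $\mult_i(\lambda_0) - m$; combined with any contribution from $\Phi'$, its multiplicity as an eigenvalue of $N$ is at least $\mult_i(\lambda_0) - m$. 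All remaining eigenvalues of $N$ are roots of $\Phi'(\lambda)$.

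Finally, the Laplacian and signless Laplacian assertions are the specializations $(\alpha,\beta,\gamma,\delta) = (-1,0,0,1)$ and $(1,0,0,1)$, for which $N$ becomes $L(G)$ and $Q(G)$ while the diagonal blocks $M_i$ become $L(G_i) + \mathcal D_i$ and $Q(G_i) + \mathcal D_i$ respectively; the polynomials $\Phi_2,\Phi_3$ are the corresponding instances of $\Phi'$. The hard part will be only the verification of \eqref{unis}, in particular the blockwise degree identity $D(G_i) + \mathcal D_i$ and the fact that the $\gamma\mathbf J$ terms are exactly what setting $\gamma = 0$ removes; once that structural fact is secured, everything else is a transcription of \thmref{mainn} and the remark following it.
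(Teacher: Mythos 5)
Your proposal is correct and follows essentially the same route as the paper: the paper likewise writes down the block form \eqref{unis}, sets $\gamma=0$, observes that the proof of Theorem~\ref{mainn} goes through verbatim with $U(G_i)+\delta\mathcal D_i$ in the role of $A(G_i)$, and then transfers the multiplicity bookkeeping of Theorem~\ref{mainnn}; you simply make explicit the two points the paper leaves implicit (the blockwise degree identity $D(G)_{ii}=D(G_i)+\mathcal D_i$ and the fact that Lemmas~\ref{lem2} and~\ref{lem3} hold for arbitrary complex blocks and scalars). One remark: in the Laplacian and signless Laplacian specializations your diagonal blocks $L(G_i)+\mathcal D_i$ and $Q(G_i)+\mathcal D_i$ are the correct ones, whereas the corollary as printed writes $L(G_i)-\mathcal D_i$ for $L(G)$ and $L(G_i)+\mathcal D_i$ for $Q(G)$ — these appear to be sign/letter typos in the paper, not a gap in your argument.
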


\section{Applications to $H$-generalized Join of Graphs}\label{hgeneralizedjoin}

\begin{defn}\cite{cons}\textit{H-generalized join.}
   Let $H$ be a graph on the vertex set $V(H) =\{v_1,v_2,\dots,v_k\}$. Let $\mathcal{F} = \{G_i \colon 1\leq i \leq k\}$ be an arbitrary family of graphs with $V(G_i) = \{v_i^1,\dots,v_i^{n_i}\}$ and let $\mathcal{S}=\{S_1,S_2,\dots,S_k\} \ \text{with} \  S_i \subseteq V(G_i)$ for $1\leq i \leq k$.  The {\em{H}-generalized join} of the family $\mathcal{F}$ of graphs constrained by $\mathcal{S}$, denoted by $\bigvee_{H,\mathcal{S}}^{\mathcal{F}}$, is obtained by replacing each vertex $v_i$ of $H$ by the graph $G_i$ in $H$ such that if $v_i$ is adjacent to $v_j$ in $H$ then each vertex of $S_i$ is adjacent to every vertex of $S_j$ in $\bigvee_{H,\mathcal{S}}^{\mathcal{F}}$. More precisely, 
 \begin{enumerate}
     \item $V\big( \bigvee_{H,\mathcal{S}}^\mathcal{F} \big) = \displaystyle \bigsqcup_{i=1}^k V(G_i)$, and
     \item $E\big( \bigvee_{H,\mathcal{S}}^\mathcal{F} \big) = \big(\displaystyle  \bigsqcup_{i=1}^k E(G_i)\big) \sqcup \big(   \bigsqcup_{ (v_i,v_j) \in E(H)} \{ xy : x \in S_i, y \in S_j \}\big).$
 \end{enumerate}
\end{defn}


\subsection{$H$-generalized join as $H_m$-join } Let $G$ be the $H$-generalized join of the family of graphs  $\mathcal{F}=\{G_1,G_2,\dots,G_k\}$ with respect to $\mathcal{S}=\{S_1,S_2,\dots,S_k\}$ where $S_i \subseteq V(G_i).$ We can write $G$ as $H_m$-join of $G_1,G_2,\dots,G_k$ for $m=k+1$ as follows. 

For each $1 \le i \le k$, we define the indexing map $I_i$ as follows.
\begin{equation}\label{constohm}
    I_i(v_i^s)=\begin{cases}
    1 & \text{if} \ v_i^s\in S_i\\
    i+1 & \text{otherwise}
\end{cases}
\end{equation}

Then we have the $H$-generalized join  $\bigvee_{H,\mathcal{S}} \mathcal{F}$ is the same as the $H_m$-join $\bigvee_{H}^{ \mathcal{F}, \mathcal I}$ where $\mathcal I = \{I_i: 1 \le i \le k\}$. For example,

\begin{example}
  Clearly, the following graph is a $(P_4)$-generalized join of $K_3,P_4,C_5,K_{3,3}$. Note that, this is also the $(P_4)_5$-join of $K_3,P_4,C_5,K_{3,3}$ for the given vertex labelling. 

\begin{figure}[h]
\centering
\begin{tikzpicture}[
every edge/.style = {draw=black,very thick},
 vrtx/.style args = {#1/#2}{%
      circle, draw, thick, fill=black,
      minimum size, label=#1:#2}
                    ]
\node(A) [vrtx=left/1] at (0, 1.5) {};
\node(B) [vrtx=left/2] at (-1, 0.5) {};
\node(C) [vrtx=left/2] at (0,-.5) {};
\path   (A) edge (B)
        (A) edge (C)
        (B) edge (C);

\node (2A) [vrtx=left/3]     at ( 2.5, 0) {};
\node (2B) [vrtx=left/3]     at (2.5, 1) {};
\node (2C) [vrtx=above/1]    at ( 2.5,2) {};
\node (2D) [vrtx=left/1]    at ( 2.5,-1) {};
\path   (2A) edge (2B)
        (2A) edge (2C)
        (2A) edge (2D);

\path (A) edge[blue] (2C)
(A) edge[blue] (2D);

\node (3A) [vrtx=above/1]     at ( 6.5, 2) {};
\node (3B) [vrtx=right/4]     at (5, 0.5) {};
\node (3C) [vrtx=left/1]    at ( 8,0.5) {};
\node (3D) [vrtx=right/4]    at ( 7.5,-1) {};
\node (3E) [vrtx=left/1]    at ( 5.5,-1) {};
\path   (3A) edge (3B) 
(3B) edge (3E)
(3D) edge (3C)
(3E) edge (3D)
(3C) edge (3A)
(2C) edge[blue] (3A)
(2C) edge[blue] (3C)
(2C) edge[blue] (3E)
(2D) edge[blue] (3C)
   (2D) edge[blue] (3A)
      (2D) edge[blue] (3E);
    
\node (4A) [vrtx=above/5]     at ( 10.5, 1.5) {};
\node (4B) [vrtx=above/5]     at (12, 1.5) {};
\node (4C) [vrtx=above/1]    at ( 13.5,1.5) {};
\node (4D) [vrtx=below/5]    at ( 10.5,-.5) {};
\node (4E) [vrtx=below/5]    at ( 12,-.5) {};
\node (4F) [vrtx=below/1]    at ( 13.5,-.5) {};

\path (4A) edge  (4D) edge (4E) edge (4F)
(4B) edge  (4D) edge (4E) edge (4F)
(4C) edge  (4D) edge (4E) edge (4F)

(3A) edge[blue,bend left] (4C) edge[blue] (4F)
(3C) edge[blue] (4C) edge[blue] (4F)
(3E) edge[blue] (4C) edge[blue,bend right] (4F)
;
\end{tikzpicture}
\caption{$(P_4)_5$- generalized join of $K_3,P_4,C_5,K_{3,3}$}
    \label{fig:figure-4}
\end{figure}

\end{example}

 A $(k,\tau)$-regular set in a graph $G$ is a subset $S$ $\subseteq$ $V(G)$  which induces a $k$-regular subgraph in $G$ such that every vertex outside $S$ has $\tau$ neighbors in $S$ \cite{cdrm}. 

The main result proven in \cite{cons} is the following.
\begin{thm}\cite[Theorem 1]{cons}
Consider a graph $H$ of order $k$ and a family of graphs $\mathcal{F}=\{ G_1,G_2,\cdots,G_k\}$ such that $ V(G_i) = \{v_i^{1},\dots,v_i^{n_i}\}$  for $1 \le i \le k$. Consider also the family of vertex subsets $\mathcal{S}=\{S_1,S_2,\cdots,S_k\},$ where 
$$ S_i \in \{ S_i^{'} \subseteq V(G_i) : \text{either}\ S_i^{'}\  \text{or}\ V(G_i)\setminus S_i^{'}\ \text{is}\ (k_i,\tau_i)-\text{regular in $G_i$ for some integers }\ k_i,\tau_i \}, $$ \text{for}\ $i=1,\dots,k. $
Let $G$ be the $H$-generalized join $\bigvee_{H,\mathcal{S}}^\mathcal{F}$. If $\lambda \ \ (\neq k_i-\tau_i) \in \sigma(G_i)$  is a non-main eigenvalue, then $\lambda \in \ \sigma(G).$ 
\end{thm}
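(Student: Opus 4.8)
The plan is to treat this statement as a direct application of the machinery already built for $H_m$-joins. First I would realize $G = \bigvee_{H,\mathcal S}^{\mathcal F}$ as the $H_m$-join with $m = k+1$ via the indexing maps of Equation \eqref{constohm}, so that Theorem \ref{mainnn} applies. The whole proof then reduces to a single claim: every non-main eigenvalue $\lambda \neq k_i - \tau_i$ of $G_i$ is in fact an $E_i$-non-main eigenvalue of $A(G_i)$. Once this is established, the first assertion of Theorem \ref{mainnn} gives that $\lambda$ is an eigenvalue of $A(G)$ with multiplicity at least $\mult_i(\lambda) \geq 1$, hence $\lambda \in \sigma(G)$. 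To set this up I would record the shape of $E_i$ under the maps \eqref{constohm}: its first column is $\bold 1_{S_i}$, its $(i+1)$-th column is $\bold 1_{V(G_i)\setminus S_i}$, and all other columns vanish. Consequently the column space of $E_i$ equals $\operatorname{span}\{\bold 1_{S_i}, \bold 1_{V(G_i)\setminus S_i}\} = \operatorname{span}\{\bold 1_{S_i}, \bold 1_{n_i}\}$, using $\bold 1_{S_i} + \bold 1_{V(G_i)\setminus S_i} = \bold 1_{n_i}$. So, by Definition \ref{def E}, showing $\lambda$ is $E_i$-non-main amounts to showing its eigenspace is orthogonal to both $\bold 1_{n_i}$ and $\bold 1_{S_i}$, and orthogonality to $\bold 1_{n_i}$ is exactly the hypothesis that $\lambda$ is non-main.

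The crux is to extract the right identity from the regularity hypothesis. Suppose first that $S_i$ is $(k_i,\tau_i)$-regular. Then the $s$-th entry of $A(G_i)\bold 1_{S_i}$ counts the neighbors of $v_i^s$ lying in $S_i$, which equals $k_i$ when $v_i^s \in S_i$ and $\tau_i$ otherwise; thus $A(G_i)\bold 1_{S_i} = k_i\bold 1_{S_i} + \tau_i \bold 1_{V(G_i)\setminus S_i} = (k_i-\tau_i)\bold 1_{S_i} + \tau_i\bold 1_{n_i}$. Now fix any $v \in \xi_{A(G_i)}(\lambda)$ (real, since $A(G_i)$ is real symmetric) and compute $\langle v, A(G_i)\bold 1_{S_i}\rangle$ in two ways. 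Moving $A(G_i)$ onto $v$ by symmetry gives $\langle A(G_i)v, \bold 1_{S_i}\rangle = \lambda\langle v, \bold 1_{S_i}\rangle$, while substituting the identity and using $\langle v, \bold 1_{n_i}\rangle = 0$ gives $(k_i-\tau_i)\langle v, \bold 1_{S_i}\rangle + \tau_i\langle v,\bold 1_{n_i}\rangle = (k_i-\tau_i)\langle v, \bold 1_{S_i}\rangle$. Equating the two yields $(\lambda - (k_i - \tau_i))\langle v, \bold 1_{S_i}\rangle = 0$, and since $\lambda \neq k_i - \tau_i$ we conclude $\langle v, \bold 1_{S_i}\rangle = 0$. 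Combined with $\langle v, \bold 1_{n_i}\rangle = 0$ this forces $\langle v, \bold 1_{V(G_i)\setminus S_i}\rangle = 0$ as well, so $\xi_{A(G_i)}(\lambda)$ is orthogonal to the entire column space of $E_i$; that is, $\lambda$ is $E_i$-non-main.

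The remaining case, in which $V(G_i)\setminus S_i$ rather than $S_i$ is $(k_i,\tau_i)$-regular, is handled verbatim after exchanging the roles of $\bold 1_{S_i}$ and $\bold 1_{V(G_i)\setminus S_i}$, since both the column space of $E_i$ and the decomposition $\bold 1_{n_i} = \bold 1_{S_i} + \bold 1_{V(G_i)\setminus S_i}$ are symmetric in $S_i$ and its complement. With $\lambda$ shown to be $E_i$-non-main, Theorem \ref{mainnn} completes the argument. I expect the only delicate point to be the conceptual one already isolated above: recognizing that $(k_i,\tau_i)$-regularity is precisely what makes $\bold 1_{S_i}$ an eigenvector of $A(G_i)$ \emph{modulo} $\operatorname{span}\{\bold 1_{n_i}\}$, with ``eigenvalue'' $k_i-\tau_i$, so that the exclusion $\lambda \neq k_i-\tau_i$ is exactly the hypothesis needed to annihilate $\langle v, \bold 1_{S_i}\rangle$ and thereby upgrade a classical non-main eigenvalue to an $E_i$-non-main one.
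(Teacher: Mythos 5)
Your proof is correct and follows essentially the same route as the paper: realize the $H$-generalized join as an $H_{k+1}$-join via the indexing maps of Equation \eqref{constohm}, show $\lambda$ is $E_i$-non-main, and invoke Theorem \ref{mainnn}. The only differences are minor: the paper first applies the Reduction Lemma to shrink the relevant column space to $\operatorname{span}\{\bold 1_{S_i}\}$ and then quotes \cite[Lemma 1]{cons} for the orthogonality $\bold 1_{S_i}\perp\xi_{A(G_i)}(\lambda)$, whereas you keep the full column space $\operatorname{span}\{\bold 1_{S_i},\bold 1_{n_i}\}$ and prove the needed direction of that lemma inline via $A(G_i)\bold 1_{S_i}=(k_i-\tau_i)\bold 1_{S_i}+\tau_i\bold 1_{n_i}$ --- both steps are valid.
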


We note that $(k_i,\tau_i)$ regularity conditions are assumed on the sets $S_i$ to prove that non-main eigenvalues of $G_i$ (other than $k_i-\tau_i$) get carried forward to $G$. We prove the same without assuming any regularity conditions on the sets $S_i$, as a corollary of our Theorem \ref{mainn} using the following \cite[Lemma 1]{cons}. 

\begin{lem} \cite[Lemma 1]{cons}
    Let $G$ be a graph with a $(k,\tau)$-regular set $S$, where $\tau>0,$ and $\lambda \in \sigma(A(G)).$ Then $\lambda$ is non-main if and only if $$\lambda = k-\tau \ \text{or} \ \bold 1_{S} \in (\xi_{G} (\lambda))^{\perp}.$$
\end{lem}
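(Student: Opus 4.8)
The plan is to distill a single linear identity from the $(k,\tau)$-regularity of $S$ and then read off both implications from it. First I would compute $A(G)\bold 1_S$ entrywise. For a vertex $v \in S$, the $k$-regularity of the subgraph induced on $S$ means $v$ has exactly $k$ neighbours inside $S$, while for $v \notin S$ the hypothesis gives exactly $\tau$ neighbours in $S$. Hence $A(G)\bold 1_S = k\bold 1_S + \tau\,\bold 1_{V(G)\setminus S}$, and substituting $\bold 1_{V(G)\setminus S} = \bold 1_n - \bold 1_S$ gives the key relation
$$A(G)\bold 1_S = (k-\tau)\bold 1_S + \tau\,\bold 1_n.$$

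Next I would test this relation against an arbitrary eigenvector. Fix $\lambda \in \sigma(A(G))$ and let $w \in \xi_G(\lambda)$, so that $A(G)w = \lambda w$. Using the symmetry of $A(G)$, I evaluate $\langle A(G)\bold 1_S,\, w\rangle$ in two ways: it equals $\langle \bold 1_S,\, A(G)w\rangle = \lambda\langle \bold 1_S,\, w\rangle$, and by the key relation it also equals $(k-\tau)\langle \bold 1_S,\, w\rangle + \tau\langle \bold 1_n,\, w\rangle$. Equating the two produces the scalar identity
$$\big(\lambda - (k-\tau)\big)\,\langle \bold 1_S,\, w\rangle = \tau\,\langle \bold 1_n,\, w\rangle,$$
which holds for every $w \in \xi_G(\lambda)$.

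With this identity in hand, both directions follow immediately and use only $\tau > 0$. For the ``if'' direction, if $\lambda = k-\tau$ the left-hand side vanishes, forcing $\langle \bold 1_n,\, w\rangle = 0$ for all $w$; and if instead $\bold 1_S \in (\xi_G(\lambda))^{\perp}$ then $\langle \bold 1_S,\, w\rangle = 0$ for all $w$, again forcing $\langle \bold 1_n,\, w\rangle = 0$ after dividing by $\tau$. In either case $\bold 1_n \perp \xi_G(\lambda)$, so $\lambda$ is non-main. For the ``only if'' direction, suppose $\lambda$ is non-main, so $\langle \bold 1_n,\, w\rangle = 0$ for every $w \in \xi_G(\lambda)$; then the identity reduces to $(\lambda-(k-\tau))\langle \bold 1_S,\, w\rangle = 0$, and whenever $\lambda \neq k-\tau$ this forces $\langle \bold 1_S,\, w\rangle = 0$ for all $w$, i.e.\ $\bold 1_S \in (\xi_G(\lambda))^{\perp}$.

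I expect essentially no obstacle: the only real content is spotting the identity $A(G)\bold 1_S = (k-\tau)\bold 1_S + \tau\bold 1_n$, after which the equivalence is a one-line inner-product manipulation. The hypothesis that must be handled with care is $\tau > 0$, which is precisely what allows one to cancel $\tau$ and conclude $\langle \bold 1_n,\, w\rangle = 0$. If $\tau = 0$ the relation would collapse to $A(G)\bold 1_S = k\bold 1_S$, making $\bold 1_S$ an eigenvector and breaking the equivalence, so this positivity assumption is genuinely used rather than incidental.
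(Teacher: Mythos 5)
Your proof is correct. Note that the paper itself gives no proof of this lemma --- it is quoted verbatim as \cite[Lemma 1]{cons} and used as a black box --- so there is nothing internal to compare against. Your argument (deriving $A(G)\bold 1_S=(k-\tau)\bold 1_S+\tau\bold 1_n$ from the $(k,\tau)$-regularity of $S$ and pairing it against an eigenvector via the symmetry of $A(G)$) is the standard proof from the cited reference, and your observation that $\tau>0$ is exactly what is needed to cancel $\tau$ in the ``if'' direction is accurate.
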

\begin{cor}
    
Consider a graph $H$ of order $k$ and a family of  graphs $\mathcal{F}=\{ G_1,G_2,\cdots,G_k\}$ such that $\lvert V(G_i) \rvert =n_i,  \ \text{for}\ i=1,\dots,k.$ Consider also the family of vertex subsets $\mathcal{S}=\{S_1,S_2,\cdots,S_k\}$ where $S_i\subseteq V(G_i)$  for $i=1,2,\dots,k$.  Let $\lambda$ be an eigenvalue of $G_i$ for some $i$. Let $G$ be the $H$-generalized join of $G_1,G_2,\dots,G_k$ with respect to $S_1,S_2,\dots,S_k$. Then
\text{if} $\bold 1_{S_i} \in (\xi_{G_i}(\lambda))^{\perp}, \text{then }  \lambda \in \sigma(G).$
\begin{pf}
    First we write $G$ as an $H_{k+1}$-join of $\mathcal{F}$ through the maps given in Equation (\ref{constohm}).
   Now, from repeated application of Lemma \ref{reduction}, we get
\begin{equation}\label{reducedcons}
    A(G)=
\begin{bmatrix}
    A(G_1) &  E_1E_2^t & \cdots  &  E_1E_k^t \\
			 E_2 E_1^t & A(G_2) & \cdots &  E_2E_k^t  \\
			\vdots  & \vdots  & \ddots   & \vdots\\
			 E_kE_1^t & E_kE_2^t & \cdots \cdots  &A(G_k)
\end{bmatrix}=\begin{bmatrix}
    A(G_1) &  \bold 1_{S_1} \bold 1_{S_2}^t & \cdots  &   \bold 1_{S_1} \bold 1_{S_k}^t \\
			\bold 1_{S_2} \bold 1_{S_1}^t & A(G_2) & \cdots &   \bold 1_{S_2} \bold 1_{S_k}^t  \\
			\vdots  & \vdots  & \ddots   & \vdots\\
			 \bold 1_{S_k}\bold 1_{S_1}^t & \bold 1_{S_k}\bold 1_{S_2}^t & \cdots \cdots  &A(G_k)
\end{bmatrix}\end{equation}

    and the statement follows from Theorem \ref{mainnn}.
\end{pf}
\end{cor}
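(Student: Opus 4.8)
The plan is to push the whole statement through the $H_m$-join machinery already developed, via the realization of an $H$-generalized join as an $H_{k+1}$-join. First I would fix the indexing maps $I_i$ of Equation (\ref{constohm}) and write down the associated indexing matrices $E_i$ of size $n_i\times(k+1)$. The point to record is their rigid shape: the first column of $E_i$ is exactly $\bold 1_{S_i}$, the $(i+1)$-th column is $\bold 1_{V(G_i)\setminus S_i}$, and every other column is zero. Consequently, for $i\neq j$ the only index at which both $E_i$ and $E_j$ carry a nonzero column is $1$, so the product collapses to $E_iE_j^t=\bold 1_{S_i}\bold 1_{S_j}^t$; this already matches the off-diagonal blocks $\rho_{i,j}E_iE_j^t$ of $A(G)$ with the adjacency pattern of the $H$-generalized join.

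Next I would invoke the Reduction Lemma. Each auxiliary label $i+1$ is \emph{private} to $G_i$: no vertex of any other $G_j$ receives it, so $V(G_i)\setminus S_i$ lies in the set $F_i$ of Situation 2 and each of the labels $2,\dots,k+1$ belongs to $l(F)$. Applying Lemma \ref{reduction} once for each such label strips away the corresponding column, reducing $m$ from $k+1$ down to $1$ and leaving the single surviving column $E_i'=\bold 1_{S_i}$. This is precisely Equation (\ref{reducedcons}): $A(G)$ is rewritten with off-diagonal blocks $\rho_{i,j}\bold 1_{S_i}\bold 1_{S_j}^t$.

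With the reduced form in hand the conclusion is immediate. For the single-column matrix $E_i'=\bold 1_{S_i}$ the column space is $\operatorname{span}\{\bold 1_{S_i}\}$, so by Definition \ref{def E} the hypothesis $\bold 1_{S_i}\in(\xi_{G_i}(\lambda))^{\perp}$ says exactly that $\lambda$ is an $E_i'$-non-main eigenvalue of $A(G_i)$. Theorem \ref{mainnn}, applied to the reduced data (now with $m=1$), then carries $\lambda$ forward to $A(G)$ with multiplicity at least $\mult_i(\lambda)\geq 1$, whence $\lambda\in\sigma(G)$. Equivalently, and this is the concrete content behind the machinery, one may take a nonzero $w\in\xi_{G_i}(\lambda)$ with $\bold 1_{S_i}^t w=0$, extend it by zeros outside the $i$-th block to a vector $\widetilde w$, and check against (\ref{reducedcons}) that $A(G)\widetilde w=\lambda\widetilde w$: the diagonal block returns $A(G_i)w=\lambda w$, while every off-diagonal block $j\neq i$ contributes $\rho_{j,i}\bold 1_{S_j}(\bold 1_{S_i}^t w)=0$.

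The step I expect to carry the real weight is the reduction, which is essential rather than cosmetic. If one worked with the full $(k+1)$-column $E_i$ instead, its column space would be $\operatorname{span}\{\bold 1_{S_i},\bold 1_{n_i}\}$, so the hypothesis $\bold 1_{S_i}\perp\xi_{G_i}(\lambda)$ would not by itself force $\lambda$ to be $E_i$-non-main, and Theorem \ref{mainnn} would only guarantee multiplicity at least $\mult_i(\lambda)-(k+1)$, which is useless. Thus the crux is verifying that the private labels are genuinely removable and that what remains is exactly the column $\bold 1_{S_i}$; one should also note that after reduction $E_i'$ is no longer a bona fide indexing matrix (its rows over $V(G_i)\setminus S_i$ vanish), so the appeal to Theorem \ref{mainnn} must be routed through the block identity of Lemma \ref{reduction}, whose proof uses only the shape of the off-diagonal blocks and not that the $E_i$ are partition matrices.
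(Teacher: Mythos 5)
Your proposal is correct and follows essentially the same route as the paper: realize $G$ as an $H_{k+1}$-join via Equation \eqref{constohm}, strip the private labels $2,\dots,k+1$ by repeated application of the Reduction Lemma to obtain the off-diagonal blocks $\rho_{i,j}\bold 1_{S_i}\bold 1_{S_j}^t$, and conclude from Theorem \ref{mainnn} since the hypothesis $\bold 1_{S_i}\in(\xi_{G_i}(\lambda))^{\perp}$ is exactly $E_i'$-non-mainness for the surviving column $E_i'=\bold 1_{S_i}$. Your additional remarks --- that the reduction is what makes the multiplicity bound nontrivial, and that Theorem \ref{mainnn} applies because its proof uses only the block shape and not that $E_i'$ is a genuine indexing matrix --- are accurate and in fact make explicit points the paper leaves implicit.
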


\subsection{\textit{Universal spectrum of $H$-generalized join of graphs}}

Consider a graph $H$ on the vertex set $V(H) =\{v_1,v_2,\dots,v_k\}$ and a family of graphs $\mathcal{F}=\{ G_1,G_2,\cdots,G_k\}$ such that $V(G_i)  =\{v_i^1,v_i^2,\dots,v_i^{n_i}\},  \ \text{for}\ i=1,\dots,k.$ Consider also the family of vertex subsets $\mathcal{S}=\{S_1,S_2,\cdots,S_k\},$ where $S_i \subseteq V(G_i).$ 
Let $G$ be the $H$-generalized join of $\mathcal{F}$ with respect to $\mathcal{S}$. First we observe that for $j\in [n_i],$
\begin{equation}\label{obscons}
\text{deg}_{G}(v_i^j)=\text{deg}_{G_i}(v_i^j)+w_i, \ \text{where } w_i=\sum_{v_iv_l \in E(H)} \lvert S_l \rvert.\end{equation} 
Then by Equations \eqref{reducedcons} and (\ref{obscons}),
$$U(G)=\begin{bmatrix}
    U(G_1)+\delta w_1 I_{n_1} & \bold 1_{S_1}\bold 1_{S_2}^t + \gamma \bold 1_{n_1} \bold 1_{n_2}^t  & \cdots  &   \bold 1_{S_1} \bold 1_{S_k}^t + \gamma  \bold 1_{n_1} \bold 1_{n_k}^t \\
			\bold  1_{S_2} \bold 1_{S_1}^t + \gamma \bold 1_{n_2}\bold 1_{n_1}^t &U(G_2)+\delta w_2 I_{n_2} & \cdots &  \bold  1_{S_2} \bold 1_{S_k}^t +\gamma \bold  1_{n_2}\bold 1_{n_k}^t \\
			\vdots  & \vdots  & \ddots   & \vdots\\
			\bold  1_{S_k}\bold 1_{S_1}^t + \gamma  \bold 1_{n_k} \bold 1_{n_1}^t &  \bold 1_{S_k} \bold 1_{S_2}^t +\gamma  \bold 1_{n_k}\bold 1_{n_2}^t & \cdots \cdots  &U(G_k)+\delta w_k I_{n_k}
\end{bmatrix}.$$
We order the vertices in $V(G_i)$ so that the vertices in $S_i$ are put first, we see that 
$$\bold 1_{S_i}\bold 1_{S_j}^t + \gamma \bold 1_{n_i}\bold 
1_{n_j}^t = L_i L_j^t,$$ where $L_i$ is an $n_i \times 2$ matrix defined by
$(L_i)_{st}=\begin{cases}
    \sqrt{\gamma } & \text{if}\ t=1 \\
     1 & \text{if} \ t=2\ \text{and}\ v_i^s \in S_i \\
    0 & \text{otherwise}.
\end{cases}$

Thus, by Theorem \ref{mainn},
\begin{eqnarray}
\begin{aligned}
    \label{Gamma}
    \det(\lambda I_n - U(G))&=\bigg(\prod_{i=1}^{k-1} \phi_{U(G_i)+\delta w_i I_{n_i}}(\lambda)\bigg)\\
	&\indent \times	\begin{vmatrix}
			I_2 & -{\Gamma_{(U(G_1)+\delta w_1 I_{n_1})}(L_1)}  & \cdots & -\Gamma_{(U(G_1)+\delta w_1 I_{n_1})}(L_1)  \\
			-\Gamma_{(U(G_2)+\delta w_2 I_{n_2})}(L_2)  & I_2 & \cdots & -\Gamma_{(U(G_2)+\delta w_2 I_{n_2})}(L_2)\\
			\vdots & \vdots & \ddots & \vdots  \\
			-\Gamma_{(U(G_k)+\delta w_k I_{n_k})}(L_k) & -\Gamma_{(U(G_k)+\delta w_k I_{n_k})}(L_k) & \cdots & I_2
		\end{vmatrix},
\end{aligned}
\end{eqnarray}
where \begin{eqnarray*}
    \begin{aligned}
    \Gamma_{(U(G_i)+\delta w_i I_{n_i})}(L_i)&=L_i^t(\lambda I_{n_i}-(U(G_i)+\delta w_i I_{n_i}))^{-1}L_i, \\
&=\begin{bmatrix}
 \gamma \bold 1_{n_i}^t(\lambda I_{n_i}-(U(G_i)+\delta w_i I_{n_i}))^{-1} \bold
1_{n_i} & \sqrt{\gamma} \bold 1_{n_i}^t(\lambda I_{n_i}-(U(G_i)+\delta w_i I_{n_i}))^{-1} \bold 1_{S_i} \\ \sqrt{\gamma}\bold 1_{S_i}^t
(\lambda I_{n_i}-(U(G_i)+\delta w_i I_{n_i}))^{-1} \bold 1_{n_i}
& \bold 1_{S_i}^t(\lambda I_{n_i}-(U(G_i)+\delta w_i I_{n_i}))^{-1}\bold 1_{S_i}
\end{bmatrix}.
\end{aligned}
\end{eqnarray*}
\subsection{Construction of co-spectral graphs}

Equation \eqref{Gamma} above expresses the characteristic polynomial of the universal adjacency matrix $U(G)$ for an arbitrary $H$-generalized join $G$ of graphs. We will use this expression to construct infinite families of cospectral graphs. First, we will find the values of the entries in the main function $L_i^t(\lambda I_{n_i}-(U(G_i)+\delta w_i I_{n_i}))^{-1}L_i$ in some particular cases.

\begin{lem}\label{maincorollary}
    Suppose $G$ is a graph on $n$ vertices and $S, S_1, S_2 \subseteq V(G)$. then
\begin{enumerate} 
 \item If $G$ is an $r$-regular graph and $\delta=0,$ then $$\bold  1_{S}^t (\lambda I_n-U(G))^{-1} \bold 1_{n} = \bold 1_{n}^t (\lambda I_n-U(G))^{-1}\bold 1_{S} = \frac{\lvert S \rvert}{\lambda-(\alpha r+\beta+\gamma n)}.$$
    \item If $S_1=V(G)$ and $\alpha=-\delta,$
     $$\bold 1_{S_2}^t(\lambda I_n-U(G))^{-1}\bold 1_{S_1}=\frac{\lvert S_2 \rvert}{\lambda-( \beta +\gamma n )}.$$
     \item  If $S_2=V(G)$ and $\alpha=-\delta,$
     $$\bold 1_{S_2}^t(\lambda I_n-U(G))^{-1}\bold 
     1_{S_1}=\frac{\lvert S_1 \rvert}{\lambda-(\beta +\gamma n)}.$$
\end{enumerate}

\end{lem}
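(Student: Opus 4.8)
The plan is to reduce every assertion to the single observation that, under the stated hypotheses, the all-ones vector $\bold 1_n$ is an eigenvector of $U(G)$. Once this is established with eigenvalue $\mu$, the resolvent acts as the scalar $\tfrac{1}{\lambda-\mu}$ on $\bold 1_n$, that is $(\lambda I_n - U(G))^{-1}\bold 1_n = \tfrac{1}{\lambda-\mu}\bold 1_n$, and each of the three quadratic forms collapses to a scalar multiple of an inner product of the type $\bold 1_T^t\bold 1_n = \lvert T\rvert$.

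First I would verify the eigenvector claim in each case. In part (1) regularity gives $A(G)\bold 1_n = r\bold 1_n$ and $D(G)=rI_n$, while the hypothesis $\delta = 0$ removes the degree term entirely; together with $J_n\bold 1_n = n\bold 1_n$ and $I_n\bold 1_n = \bold 1_n$ this yields $U(G)\bold 1_n = (\alpha r + \beta + \gamma n)\bold 1_n$, so $\mu = \alpha r + \beta + \gamma n$. In parts (2) and (3) I would not use regularity at all; instead the hypothesis $\alpha = -\delta$ lets me rewrite $\alpha A(G) + \delta D(G) = -\alpha(D(G)-A(G)) = -\alpha L(G)$, and since the Laplacian satisfies $L(G)\bold 1_n = 0$, the adjacency and degree contributions cancel on $\bold 1_n$. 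Hence $U(G)\bold 1_n = (\beta+\gamma n)\bold 1_n$ and $\mu = \beta + \gamma n$.

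It then remains to read off the three identities. For part (1), placing $\bold 1_n$ on the right gives $\bold 1_S^t(\lambda I_n - U(G))^{-1}\bold 1_n = \tfrac{1}{\lambda-\mu}\bold 1_S^t\bold 1_n = \tfrac{\lvert S\rvert}{\lambda-\mu}$; since $U(G)$ is real symmetric its resolvent is symmetric, so the reversed form $\bold 1_n^t(\lambda I_n - U(G))^{-1}\bold 1_S$ equals the same scalar, giving both stated equalities at once. Parts (2) and (3) are simply the two ways of placing the all-ones vector: when $S_1 = V(G)$, so $\bold 1_{S_1}=\bold 1_n$, I apply the resolvent to $\bold 1_n$ on the right and contract with $\bold 1_{S_2}^t$ to obtain $\tfrac{\lvert S_2\rvert}{\lambda-(\beta+\gamma n)}$; when $S_2 = V(G)$ I use symmetry to let $\bold 1_n$ act on the left and contract with $\bold 1_{S_1}$ to obtain $\tfrac{\lvert S_1\rvert}{\lambda-(\beta+\gamma n)}$. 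There is no genuine obstacle here: the entire content is the eigenvector identification, and the only points deserving a line of justification are the Laplacian identity $L(G)\bold 1_n = 0$ in parts (2)--(3) and the symmetry of $(\lambda I_n - U(G))^{-1}$ used to flip the two-sided forms.
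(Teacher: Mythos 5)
Your proposal is correct and follows essentially the same route as the paper: both arguments rest on the observation that $\bold 1_n$ is an eigenvector of $U(G)$ (equivalently of $\lambda I_n-U(G)$) with eigenvalue $\alpha r+\beta+\gamma n$ in part (1) and $\beta+\gamma n$ in parts (2)--(3), after which the quadratic forms collapse to $\lvert S\rvert/(\lambda-\mu)$. Your explicit appeals to $L(G)\bold 1_n=0$ and to the symmetry of the resolvent merely spell out steps the paper leaves as ``similarly.''
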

\begin{proof}
Suppose $G$ is an $r$-regular graph and $\delta=0,$ 
Then
    $$(\lambda I_n-U(G))\bold 1_{n}= (\lambda I_n-(\alpha A(G)+ \beta I + \gamma J)\bold 1_{n}= (\lambda-(\alpha r+ \beta + \gamma n
    )) \bold 1_{n}$$ which implies $\bold 1_{S}^t \bold 1_{n}=\bold 1_{S}^t (\lambda I_n-U(G\bold ))^{-1} (\lambda-(\alpha r+\beta+\gamma n ))\bold 1_{n}$ and so
    $$\bold 1_{S}^t(\lambda I_n-U(G))^{-1}\bold 1_{n}=\frac{\lvert S \rvert}{(\lambda-(\alpha r+\beta+\gamma n ))}.$$
    Similarly we have 
    $$\bold 1_{n}^\bold t(\lambda I_n-U(G))^{-1}\bold 1_{S}=\frac{\lvert S \rvert}{(\lambda-(\alpha r+\beta+\gamma n))}.$$
    
    Now if $\alpha=-\delta$ and $S_1=V(G),$
    $$(\lambda I_n-U(G))\bold 1_{S_1}=(\lambda-(\beta +  \gamma n)) \bold 1_{S_1}$$ which implies $\bold 1_{S_2}^t \bold 1_{S_1}=(\lambda-( \beta+\gamma n))\bold 1_{S_2}^t(\lambda I_n-U(G))^{-1}\bold 1_{S_1}$ and so
    $$\bold 1_{S_2}^t(\lambda I_n-U(G))^{-1}\bold 1_{S_1}=\frac{\lvert S_2 \rvert}{(\lambda-(\beta+\gamma n ))}.$$
    (3) follows similarly.
\end{proof}
Using Equation \eqref{Gamma} and Lemma \ref{maincorollary}, we now find infinite families of cospectral graphs, which are realized as $H_m$-joins of graphs. In particular, we find these families as $H$-generalized joins of graphs.

\begin{thm}\label{cospcor}
    Let $H$ be a graph on $k$ vertices. Let $G_i$ and  $G_i^{'}$ be graphs on $n_i$ vertices for $i\in [k]$. Let $S_i \subseteq V(G_i)$, $S_i^{'}\subseteq V(G_i^{'})$ with $\lvert S_i \rvert=\lvert S_i^{'} \rvert$. 
    \begin{enumerate}

        \item If, for $i\in [k],$ $G_i$ and $G_i^{'}$ are $r_i$-regular and $A$-cospectral, $\Gamma_{A(G_i)}(\bold 1_{S_i})=\Gamma_{A(G_i^{'})}(\bold 1_{S^{'}_i})$, then $\bigvee_{H,\mathcal{S}} \mathcal{F}$ and   $\bigvee_{H,\mathcal{S'}} \mathcal{F'}$ are $A$-cospectral.
        \item If, for $i\in [k],$ $G_i$ and $G_i^{'}$ are $r_i$-regular and $S$-cospectral, $\Gamma_{S(G_i)}(\bold 1_{S_i})=\Gamma_{S(G_i^{'})}(\bold 1_{S^{'}_i})$, then $\bigvee_{H,\mathcal{S}} \mathcal{F}$ and   $\bigvee_{H,\mathcal{S'}} \mathcal{F'}$ are $S$-cospectral.
        \item If, for $i\in [k],$ $G_i$ and $G_i^{'}$ are $L$-cospectral, $\Gamma_{L(G_i)}(\bold 1_{S_i})=\Gamma_{L(G_i^{'})}(\bold 1_{S^{'}_i})$ then $\bigvee_{H,\mathcal{S}} \mathcal{F}$ and   $\bigvee_{H,\mathcal{S'}} \mathcal{F'}$ are $L $-cospectral.
    \item If, for each $i\in[k]$, $G_i$ and $G_i^{'}$ are $U$-cospectral and ${\Gamma_{U(G_i)+\delta w_i I_{n_i}}(L_i)}={\Gamma_{U(G_i^{'})+\delta w_i I_{n_i}}(L_i^{'})}$ (as defined in Equation \eqref{Gamma}), then $\bigvee_{H,\mathcal{S}} \mathcal{F}$ and   $\bigvee_{H,\mathcal{S'}} \mathcal{F'}$ are $U$-cospectral.
    \end{enumerate}
\end{thm}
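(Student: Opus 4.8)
The plan is to prove each of the four statements by showing that the two $H$-generalized joins have identical characteristic polynomials for the matrix in question, reading this equality off the closed form in Equation~\eqref{Gamma}. The structural point I would isolate first is that, for a fixed host graph $H$ and fixed block sizes $n_i$, the right-hand side of \eqref{Gamma} depends on each factor $G_i$ \emph{only} through two ingredients: the shifted characteristic polynomial $\phi_{U(G_i)+\delta w_i I_{n_i}}(\lambda)$ and the $2\times 2$ main-function matrix $\Gamma_{(U(G_i)+\delta w_i I_{n_i})}(L_i)$. Since $|S_i|=|S_i'|$ for every $i$, the weights $w_i=\sum_{v_iv_l\in E(H)}|S_l|$ satisfy $w_i=w_i'$, so the shifts and all ambient combinatorial data coincide for the two families. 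It therefore suffices, in each part, to verify that these two ingredients agree factor by factor.

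I would settle part (4) directly. The hypothesis that $G_i$ and $G_i'$ are $U$-cospectral gives $\det(\lambda I_{n_i}-U(G_i))=\det(\lambda I_{n_i}-U(G_i'))$ as polynomials; substituting $\lambda\mapsto\lambda-\delta w_i$ and using $w_i=w_i'$ yields $\phi_{U(G_i)+\delta w_i I_{n_i}}(\lambda)=\phi_{U(G_i')+\delta w_i I_{n_i}}(\lambda)$. The explicit hypothesis $\Gamma_{U(G_i)+\delta w_i I_{n_i}}(L_i)=\Gamma_{U(G_i')+\delta w_i I_{n_i}}(L_i')$ supplies the equality of main-function matrices. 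Feeding both into \eqref{Gamma} gives equal characteristic polynomials, hence $U$-cospectrality.

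Parts (1) and (3) I would then obtain by specializing $(\alpha,\beta,\gamma,\delta)$ and reducing the $2\times 2$ main-function matrix to the single scalar main function in the hypotheses. Writing $M_i=\lambda I_{n_i}-(U(G_i)+\delta w_i I_{n_i})$, the entries of $\Gamma_{(U(G_i)+\delta w_i I_{n_i})}(L_i)$ are $\gamma\,\bold 1_{n_i}^t M_i^{-1}\bold 1_{n_i}$, the cross terms $\sqrt\gamma\,\bold 1_{n_i}^t M_i^{-1}\bold 1_{S_i}$ (and its transpose), and $\bold 1_{S_i}^t M_i^{-1}\bold 1_{S_i}$. For the adjacency case $(1,0,0,0)$ and the Laplacian case $(1,0,0,-1)$ one has $\gamma=0$, so the first column of $L_i$ vanishes and only the $(2,2)$ entry survives; this entry is an affine transform of $\Gamma_{A(G_i)}(\bold 1_{S_i})$, respectively $\Gamma_{L(G_i)}(\bold 1_{S_i})$. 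Equality of these scalar main functions as rational functions of $\lambda$ persists under the affine change of variable relating $M_i$ to $\lambda I-X(G_i)$, and $A$- (respectively $L$-) cospectrality gives the matching of the shifted characteristic polynomials, so \eqref{Gamma} again yields equal spectra.

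The one place where regularity does genuine work is the Seidel case (2), where $(\alpha,\beta,\gamma,\delta)=(-2,1,1,0)$ and $\gamma\neq 0$, so the entries touching $\bold 1_{n_i}$ no longer vanish. Here I would invoke \lemref{maincorollary}(1): since each $G_i$ is $r_i$-regular and $\delta=0$, the cross term equals $\dfrac{|S_i|}{\lambda-(\alpha r_i+\beta+\gamma n_i)}$ and, taking $S=V(G_i)$, the $(1,1)$ entry equals $\gamma\cdot\dfrac{n_i}{\lambda-(\alpha r_i+\beta+\gamma n_i)}$. Both depend only on $r_i$, $n_i$, $|S_i|$, which coincide for $G_i$ and $G_i'$, while the $(2,2)$ entry matches by the hypothesis $\Gamma_{S(G_i)}(\bold 1_{S_i})=\Gamma_{S(G_i')}(\bold 1_{S_i'})$; hence the full matrices agree and part (2) follows from \eqref{Gamma}. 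The main obstacle is precisely this reduction of the matrix-valued main function to its scalar ingredient: one must check that every entry involving $\bold 1_{n_i}$ is pinned down by the regularity hypothesis through \lemref{maincorollary}, and that the $\sqrt\gamma$ bookkeeping together with the affine substitution coming from the shift $\lambda\mapsto\lambda-\delta w_i$ does not break the factor-by-factor equality. (Note that in parts (1) and (3) the regularity hypothesis is not actually needed, since $\gamma=0$ already kills the offending entries.)
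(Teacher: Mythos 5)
Your proposal is correct and follows essentially the same route as the paper: both arguments reduce each part to checking, factor by factor, that the shifted characteristic polynomials and the $2\times 2$ main-function matrices entering Equation \eqref{Gamma} coincide, using $\lvert S_i\rvert=\lvert S_i'\rvert$ (hence $w_i=w_i'$) and Lemma \ref{maincorollary} to pin down the entries involving $\bold 1_{n_i}$. The only divergence is minor: in parts (1) and (3) the paper still computes all four entries (via regularity and Lemma \ref{maincorollary}(2)--(3)), whereas you note that $\gamma=0$ kills the first column of $L_i$ so only the $(2,2)$ entry matters --- a valid streamlining that also correctly exposes the regularity hypothesis in (1) as unused.
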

\begin{pf}
 To prove (1), we observe that if $G_i$ and $G_i^{'}$ are $r_i$-regular, by Lemma \cite[Proposition 6]{leman}, $$\bold 1_{n_i}^t(\lambda I_{n_i}-A(G_i))^{-1} \bold
1_{n_i}=(\Gamma_{A(G_i)}(L_i))_{11}=\frac{n_i}{\lambda-r_i}=(\Gamma_{A(G^{'}_i)}(L_i^{'}))_{11}= \bold 1_{n_i}^t(\lambda I_{n_i}-A(G_i^{'}))^{-1}\bold 
1_{n_i}.$$ Since $\lvert S_i \rvert =\lvert S_i^{'} \rvert$, by Lemma \ref{maincorollary}(1), $$(\Gamma_{A(G_i)}(L_i))_{12}=(\Gamma_{A(G_i)}(L_i))_{21}=\frac{\lvert S_i \rvert}{\lambda-r_i}.$$
$$(\Gamma_{A(G_i^{'})}(L_i^{'}))_{12}=(\Gamma_{A(G_i^{'})}(L_i^{'}))_{21}=\frac{\lvert S_i \rvert}{\lambda-r_i}.$$
Also,  $(\Gamma_{A(G_i)}(L_i))_{22}=\Gamma_{A(G_i)}(\bold 1_{S_i})=\Gamma_{A(G_i^{'})}(\bold 1_{S^{'}_i})=(\Gamma_{A(G_i^{'})}(L_i^{'}))_{22}$.

Thus (1) follows from Equation \eqref{Gamma}. Similarly, we have (2). To prove (3), we observe that by Lemma \ref{maincorollary}(2),\ref{maincorollary}(3),
$$ \bold 1_{n_i}^t(\lambda I_{n_i}-L(G_i))^{-1} \bold
1_{n_i}=((\Gamma_{L(G_i)}(L_i))_{11}=\frac{n_i}{\lambda}=(\Gamma_{L(G_i^{'})}(L_i^{'}))_{11}=\bold 1_{n_i}^t(\lambda I_{n_i}-L(G_i^{'}))^{-1}\bold
1_{n_i},$$ 
$$(\Gamma_{L(G_i)}(L_i))_{12}=(\Gamma_{L(G_i)}(L_i))_{21}=\frac{\lvert S_i \rvert}{\lambda}.$$
$$((\Gamma_{L(G_i^{'})}(L_i^{'}))_{12}=(\Gamma_{L(G_i^{'})}(L_i^{'}))_{21}=\frac{\lvert S_i \rvert}{\lambda}.$$
Also,  $(\Gamma_{L(G_i)}(L_i))_{22}=\Gamma_{L(G_i)}(\bold 1_{S_i})=\Gamma_{L(G_i^{'})}(\bold 1_{S^{'}_i})=(\Gamma_{L(G_i^{'})}(L_i^{'}))_{22}$, which implies (3) by Equation \eqref{Gamma}. Claim (4) follows immediately from Equation \eqref{Gamma}.
\end{pf}

\begin{rem}
    We note that starting with two (regular in the cases of (1) and (2) above) cospectral graphs, we can construct bigger cospectral graphs which are $H$-generalized joins of graphs given that their respective main functions $\Gamma_{X(G_i)}(\bold 1_{S_i})$ and  $\Gamma_{X(G_i^{'})}(\bold 1_{S_i^{'}})$ match. These main functions can be explicitly calculated using techniques similar to those in \cite{leman}.
\end{rem}

\subsection{Future Directions:}
\begin{enumerate}
      \item In this paper, we have shown that the graphs obtained as the Cartesian product of two graphs, $H$-join, and the $H$-generalized join of a family of graphs can be realized as $H_m$-joins. Using this realization and the results of this paper, we have studied their spectrum. Find other operations that are particular cases of the $H_m$-join operation and study the spectrum of the resulting graphs using the results proved in this paper.

    \item Characterise the graphs which can be realized as $H_m$-join for a suitable choice of graphs for each $m \in \mathbb N$. This problem is partially discussed in this paper. For example, the case $m=1$.

    \item Generalize the results known for main and non-eigenvalues of graphs from the literature to the more general $E$-main eigenvalues defined in this paper. For example, we can study when a graph $G$ has exactly $k$ number of $E$-main eigenvalues for a matrix $E$ \cite{du}.

\end{enumerate}

\noindent In part II of this paper, which is under preparation, we are working on providing complete/partial answers to the above questions and more.

\end{document}